\def\endthebibliography{%
	\def\@noitemerr{\@latex@warning{Empty `thebibliography' environment}}%
	\endlist
}
\title{\LARGE \bf
	Analysis of minima for geodesic and chordal cost for a minimal 2D pose-graph SLAM problem
}
\author{Felix H. Kong$^{1}$, Jiaheng Zhao$^{1}$, Liang Zhao$^{1}$, and Shoudong Huang$^{1}$
	\thanks{$^{1}$The authors are with the Centre for Autonomous Systems, Faculty of Engineering and Information Technology, University of Technology Sydney (UTS), Sydney, Australia. Email:
		{\tt\small \{felix.kong, jiaheng.zhao, liang.zhao,shoudong.huang\}@uts.edu.au}}%
}
\newcommand{\R}{\ensuremath{\mathbb R}}
\newcommand{\inv}{\ensuremath{^{-1}}}
\newcommand{\wrap}{\ensuremath{\mathrm{wrap}}}
\newcommand{\diag}{\ensuremath{\mathrm{diag}}}
\newcommand{\atanTwo}{\ensuremath{\mathrm{atan2}}}
\newcommand{\transp}{\ensuremath{^{^\intercal}}}
\newtheorem{proposition}{Proposition}
\newtheorem{theorem}{Theorem}
\newtheorem{remark}{Remark}
\newtheorem{lemma}{Lemma}
\begin{document}

	\maketitle
	\thispagestyle{empty}
	\pagestyle{empty}

	\begin{abstract} 
		In this paper, we show that for a minimal 2D pose-graph SLAM problem, even in the ideal case of perfect measurements and spherical covariance, using \textit{geodesic distance} (in 2D, the ``wrap function'') to compare angles results in multiple suboptimal local minima.  We numerically estimate regions of attraction to these local minima for some examples, give evidence to show that they are of nonzero measure, and that these regions grow in size as noise is added. In contrast, under the same assumptions, we show that the \textit{chordal distance} representation of angle error has a unique minimum up to periodicity. For chordal cost, we find that initial conditions failing to converge to the global minimum are far fewer, fail because of numerical issues, and do not seem to grow with noise in our examples. 
		
		\textit{Keywords} --- Pose-graph SLAM, convergence analysis
	\end{abstract}

	\section{INTRODUCTION}
	Simultaneous Localization and Mapping (SLAM) is concerned with simultaneously estimating the pose of a robot (localization) and building a map of its surroundings (mapping). This capability has been useful in many areas, such as unmanned aerial vehicles \cite{Weiss2011MonocularSLAMBased,Kim2007RealTimeImplementation}, autonomous ground vehicles \cite{Geiger2012AreWeReady}, \cite{Lategahn2011VisualSLAMAutonomous}, and a plethora of other applications \cite{Cadena2016PastPresentFuture}. 
	
	Currently, the ``modern'' approach to SLAM is to represent the robot's trajectory as a graph: that is, to represent the robot's poses as nodes, and measurements from those poses as edges. Then, given this graph, typically a weighted least-squares optimization problem is solved to estimate the most likely robot poses given the robot's measurements \cite{Cadena2016PastPresentFuture}. However, even in 2D SLAM, the optimization problem is usually nonlinear and nonconvex, resulting in the possibility for iterative solvers to converge to a local instead of global minimum. 
	Although modern solvers appear to achieve a global minimum much of the time, it is as of yet unclear under what conditions local minima exist, and how many there are, even for very small problems.

	It is known that the cost on error in orientations of poses is a major contributor to the nonlinearity of the problem \cite{Huang2010HowFarIs,Wang2013StructureNonlinearitiesPose}. Because of this, choice of a particular representation of orientation error can affect the existence, number, and nature of minima in a pose-graph optimization problem.
	In 2D SLAM, one common method to evaluate orientation error is to directly subtract the two (scalar) angles, then ``wrap'' this difference to be on the interval $[-\pi,\pi)$, resulting in the ``geodesic distance'' between two orientations. Open-source SLAM software implementations such as Google's Cartographer \cite{Hess2016RealTimeLoop}, and also other popular software such as MATLAB's Navigation Toolbox implementation use geodesic distance via the ``wrap'' function in their cost functions. Representing orientation error using geodesic distance is intuitive and widely known; however, it has been linked empirically to convergence to suboptimal local minima \cite{Wang2018Comparisontwodifferent}. 
	
	Another method to evaluate orientation error is to use the ``chordal distance'' (e.g. \cite{Carlone2016PlanarPoseGraph,Carlone2015LagrangianDuality3D}), which is calculated using the Frobenius norm of the difference in rotation matrices. 
	The use of the chordal distance in the cost function of a pose-graph optimization problem has been investigated in several previous works. 
By using chordal cost and reformulating the problem as an equality-constrained optimization problem, global optimality can be certified for a pose-graph problem by solving a semidefinite program  \cite{Carlone2015LagrangianDuality3D,Carlone2015DualitybasedVerification}. Built upon this work, methods to speed up the computation of the global optimality certificate are proposed in \cite{Briales0609FastGlobalOptimality}. Solvers have also been developed that yield certifiably globally optimal solutions \cite{Rosen2019SESync,Briales2017CartnSyncFast,Mangelson2019GuaranteedGloballyOptimal}.
	
			\begin{figure}
		\centering
		
		\subfloat[Regions $\mathcal R_k$, marked by red lines. \label{fig:regions}]{%
			\includegraphics[height=0.2\textheight]{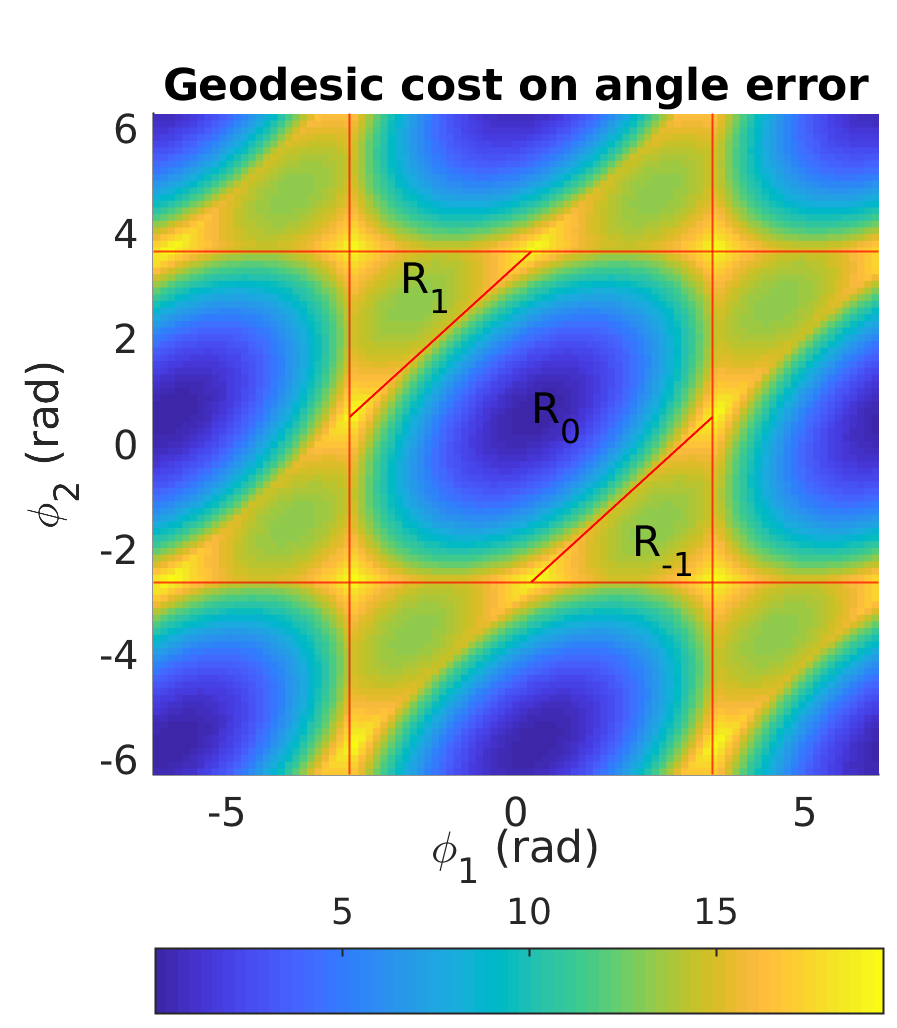}}
		\hfill
		\subfloat[Poses at local minimum using geodesic cost. \label{fig:localminimumslamsolution}]{%
			\includegraphics[height=0.2\textheight]{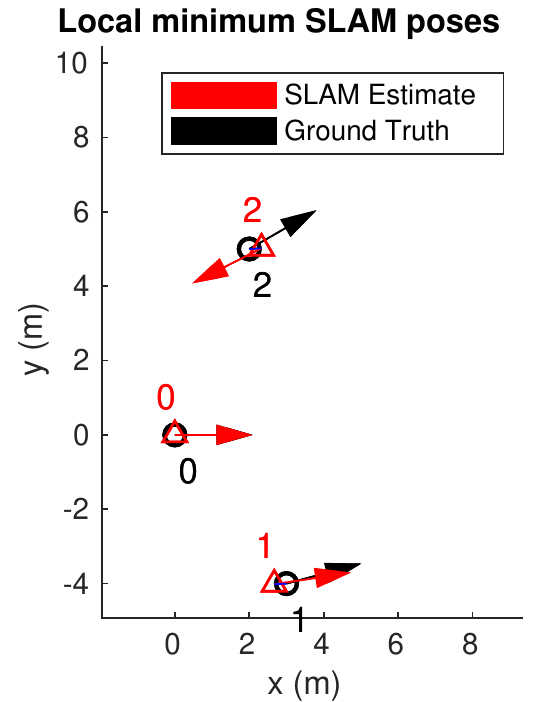}}
		
		\caption{(a) This heatmap motivates why there may be local minima when using geodesic distance. This is a plot of one period of $F_\phi(\Phi)$, the angular part of the cost function using geodesic distance. Clearly, local minima exist in $\mathcal R_1$ and $\mathcal R_{-1}$, which suggest the that there may be local minima in the full cost function. See Section \ref{sec:angularCost} for details. (b) Poses corresponding to a local minimum of a minimal 2D pose-graph problem using geodesic cost. Note the large orientation error of pose 2. See Section \ref{sec:exampleAndDiscussion} for more detail about this example problem. }
		\label{fig:group1}
	\end{figure}
	
	While practical SLAM problems are much larger, analyzing a small problem allows clear conclusions to be made, which can inform insights into larger problems. Several papers have investigated ``minimal'' SLAM problems in an attempt to show the fundamental structure and limitations of different formulations of SLAM. In the formulation in \cite{Wang2013StructureNonlinearitiesPose}, the authors concluded that for noise in some bounded interval, there is a unique global minimum, and no local minima. However, the authors assumed that the angle differences are always within $[-\pi,\pi)$. This allowed the angular terms to be treated as linear, which facilitates analysis, but cannot be assumed in general. In another paper \cite{Wang2018Comparisontwodifferent}, the authors compare the use of geodesic and chordal distance in a feature-based SLAM problem with two robot poses and a single landmark, which is considered to have no orientation. For that problem, using chordal cost, the authors concluded that a unique global minimum exists, regardless of noise.
	
	This paper compares the influence of using geodesic or chordal distance on the convergence properties of a minimal pose-graph problem. In this paper, we study a planar pose-graph problem with three poses and three measurements. Our contributions are:
	\begin{itemize}
		\item We prove that even in the case of perfect measurements with spherical covariance, if geodesic distance is used, multiple suboptimal local minima exist (see Figure \ref{fig:localminimumslamsolution} for an example of a local minimum). This is a direct result of representing orientation error using geodesic distance (see Figure \ref{fig:regions}). This clarifies the work in \cite{Wang2013StructureNonlinearitiesPose}; in particular, answers the question of what happens when angle differences outside of $[-\pi,\pi)$ are considered.
		\item We numerically estimate the regions of attraction to the local minima for some examples with varying noise magnitude, and show that they are of nonzero measure. Conservative regions of attraction to global minimum have been investigated for higher-dimensional problems using Gauss-Newton \cite{Carlone2013ConvergenceAnalysisPose}; in this paper, due to the small size of the problem, we can approximate the size of the region of attraction to the global (and local) minima, with little conservativism. 
		\item  We build upon \cite{Wang2018Comparisontwodifferent}, asking the question: ``Does a unique global minimum for chordal cost with any noise magnitude exist for the 3-pose case too?'' By adding orientation information to the landmark, that problem becomes the 3-pose problem considered in this paper. We prove that for the noise-free case, a unique global minimum exists, but provide a counterexample to show that uniqueness of the global minimum does not hold for arbitrary noise in the 3-pose case.
		\item Finally, we search for points that failed to converge to the global minimum in the case of chordal cost. Across all three example problems, only four singleton points are found, each failing to converge due to numerical issues. This is a significant reduction in area compared to the regions of attraction to local minima in the geodesic cost case.
	\end{itemize}

	
	
	The paper is structured as follows: we first define the minimal SLAM problem using geodesic and chordal cost in Section \ref{sec:problemFormulation}, and rewrite them in more convenient formulations. Then, we analyze the number and nature of minima for geodesic and chordal cost in Sections \ref{sec:angularCost} and \ref{sec:chordalCost}, respectively. In Section \ref{sec:exampleAndDiscussion}, we analyze a few examples and compute their regions of attraction to local minima when using geodesic cost. Finally, we conclude with Section \ref{sec:conclusion}.
	
	\section{Two formulations of a 3-pose planar pose-graph SLAM problem}\label{sec:problemFormulation}
	\subsection{Notation and conventions}
	In this paper, we use the semicolon to mean vertical vector concatenation. For an angle $\phi\in\R$, let $R(\phi)\in SO(2)$ be its corresponding rotation matrix.
	\subsection{The 3-pose problem}
	In this paper we consider a 2D pose-graph problem with three poses and three measurements. 
	Let each of the \textit{poses} have a position $p_i\in\R^2$, and an orientation $\phi_i\in\R$ for $i\in\{0,1,2\}$; for short we write $R_i = R(\phi_i)$. Let the vector $P = [p_1;p_2]\in\R^4$ and the the vector $\Phi = [\phi_1;\phi_2]\in\R^2$. In this paper we will treat $p_0 = 0\in\R^2$ and $\phi_0 = 0$ as fixed, so we exclude them from $P$ and $\Phi$. 
	
	Suppose at each pose the robot has taken some measurements from pose $i$ to pose $j$: a relative position $p_{ij}\in\R^2$, and a relative rotation $\phi_{ij}\in\R$; as before, we use the shorthand $R_{ij} = R(\phi_{ij})$. We assume the most ideal case, that each measurement in $p_{ij}$ and $\phi_{ij}$ has variance $\sigma^2$; hence let $\Sigma = \sigma^2 I$, where $I$ is the (square) identity matrix whose size is determined by context. Hence $\Sigma$ is a spherical covariance matrix \cite{Wang2013nonlinearitystructurepoint}. For simplicity, we have assumed that $p_{ij}$ and $\phi_{ij}$ have the same variance. However, the analysis in this paper holds if the position and orientations have different variances.
	
	In this paper's formulation of the 3-pose problem, we assume there are three measurements, resulting in three relative positions $p_{01},p_{12},p_{02}$ and three relative rotations $\phi_{01},\phi_{12},\phi_{02}$. Throughout the paper, we assume that none of the measurements are zero, and that none of the poses are equal to another.
	
	With these definitions, a pose-graph optimization problem can be set up to find robot poses that best satisfy these measurements, according to some cost function. 
	One formulation of the cost function is what we will call the ``geodesic cost'' $F$:
	\begin{align}
	F(P,\Phi) &= \underbrace{\sum_{(i,j)\in\mathcal I}|p_{ij} - R_i\transp(p_j-p_i)|_\Sigma^2}_{F_p(P,\Phi)} \label{eqn:6DimWrapCost}\\
	& + \underbrace{\sum_{(i,j)\in\mathcal I}\left(\frac{\wrap(\phi_{ij}-(\phi_j-\phi_i))}{\sigma}\right)^2}_{F_\phi(\Phi)}\nonumber,
	\end{align} where $|\cdot|_\Sigma$ is the Mahalanobis distance with respect to covariance $\Sigma$, $\mathcal I$ is the set $\{(0,1),(1,2),(0,2)\}$, and $\wrap(\phi)$ returns the angle equivalent to $\phi$ on the interval $[-\pi,\pi)$.
	
	The other cost function we consider in this paper will be called ``chordal cost'' $G$:
	\begin{align}
	G(P,\Phi) = F_p(P,\Phi) + \underbrace{\sum_{(i,j)\in\mathcal I} \frac{1}{2\sigma^2}\|R_iR_{ij}-R_j\|_F^2}_{G_\phi(\Phi)},\label{eqn:chordalCost}
	\end{align} where $\|\cdot\|_F$ is the Frobenius norm of a matrix. The factor of $\frac{1}{2}$ is introduced so $G_\phi(\Phi)$ and $F_\phi(\Phi)$ have the same linearization at the origin, c.f. \cite[Remark 1]{Carlone2015DualitybasedVerification}. The two cost functions $F$ and $G$ are different ways of quantifying the same qualitative idea: they evaluate how well given poses $P,\Phi$ ``match'' the measurements. Then, by minimizing either
	\begin{align}
	&\min_{P,\Phi} F(P,\Phi)\text{, or}\label{opt:orignalWrapCost}\\ 
	&\min_{P,\Phi} G(P,\Phi),\label{opt:originalChordalCost}
	\end{align} $(P,\Phi)$ can be found that explain the measurements well. 
	
	Notice that $F$ and $G$ share the same $F_p(P,\Phi)$, and differ only in $F_\phi(\Phi)$ and $G_\phi(\Phi)$. For our particular problem, $F_p(P,\Phi)$ simplifies to:
	\begin{equation}
	F_p(P,\Phi) = F_p(P,\phi_1),
	\end{equation} since $\phi_0 = 0 \Rightarrow R_0 = I_2$ and $F_p$ does not depend on $R_2$.

	\subsection{Dimensionality reduction via Schur complement}\label{sec:dimensionality reduction}
	The decision space for the optimization problems minimizing $F$ and $G$ is $\dim(P)\cup \dim(\Phi) = 6$. In this subsection we reduce it to two dimensions by noticing that the problem of minimizing $F_p(P,\phi_1)$ can be solved in closed-form given any $\phi_1$. The following lemma will aid us in this \cite{Wang2013nonlinearitystructurepoint}:
	\begin{lemma}\label{lem:linearLeastSquaresSolution}
		The linear least-squares problem of minimizing $|Ax-b|_C^2$ has a unique solution if $A$ has full column rank, and $C > 0$:
		\begin{align}
		x^\star &= \underset{x}{\text{argmin }} |Ax-b|_C^2 = (A\transp C\inv A)\inv A\transp C\inv b\\
		F^\star &= \underset{x}{\text{min }} |Ax-b|_C^2 = b\transp Q b,
		\end{align} where $Q = C\inv - C\inv A(A\transp C\inv A)\inv A\transp C\inv$. \hfill $\square$
	\end{lemma}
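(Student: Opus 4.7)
The plan is to recognize Lemma \ref{lem:linearLeastSquaresSolution} as the standard weighted linear least-squares result and derive it directly. First, I expand the Mahalanobis cost as $|Ax-b|_C^2 = (Ax-b)\transp C\inv (Ax-b)$, take the gradient in $x$, and set it to zero to obtain the normal equations $A\transp C\inv A\, x = A\transp C\inv b$. Since $C>0$ we have $C\inv > 0$, and since $A$ has full column rank, $A\transp C\inv A$ is symmetric positive definite and therefore invertible; this immediately yields the claimed formula for $x^\star$. Uniqueness follows because the Hessian $2 A\transp C\inv A$ is positive definite, so the cost is strictly convex.

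For the minimum value, I would substitute $x^\star$ back into the cost. Defining the oblique projector $P = A(A\transp C\inv A)\inv A\transp C\inv$, we have $A x^\star = P b$, so $F^\star = b\transp (P-I)\transp C\inv (P-I) b$. Expanding this gives four terms: $P\transp C\inv P - P\transp C\inv - C\inv P + C\inv$. Two algebraic facts then collapse everything: (i) $P\transp C\inv = C\inv P$, because both equal $C\inv A(A\transp C\inv A)\inv A\transp C\inv$ using $C\inv$ symmetric; and (ii) $P\transp C\inv P = C\inv P$, because the inner factor $A\transp C\inv A$ cancels with its inverse. Substituting these identities into the expansion leaves $C\inv - C\inv P = Q$, yielding $F^\star = b\transp Q b$ as claimed.

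There is no real obstacle here; the result is classical. The only mildly delicate step is the algebraic simplification of the cross terms, which relies on recognizing that $P$ behaves like a projector in the sense that $P\transp C\inv P = P\transp C\inv$, and that $C\inv P$ is symmetric. Once these are observed the computation is mechanical, so I would keep the write-up short and point the reader to a reference such as the cited \cite{Wang2013nonlinearitystructurepoint} for further detail if needed.
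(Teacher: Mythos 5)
Your derivation is correct: the normal equations, the positive-definiteness of $A\transp C\inv A$ for uniqueness, and the projector identities $P\transp C\inv = C\inv P$ and $P\transp C\inv P = C\inv P$ all check out and collapse the expansion to $b\transp Q b$ exactly as claimed. The paper itself gives no proof of this lemma and simply cites \cite{Wang2013nonlinearitystructurepoint} for it as a standard result, so your write-up is the expected classical argument and nothing more needs to be said.
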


	Now, we rewrite $F_p$ for use with Lemma \ref{lem:linearLeastSquaresSolution}:
	\begin{align}
	F_p(P,\phi_1) &= |p_{01} - (p_1-p_0)|_{\Sigma}^2 + |p_{12} - R_1\transp(p_2-p_1)|_{\Sigma}^2\nonumber\\ 
	&\phantom{aa}+ |p_{02} -(p_2-p_0)|_{\Sigma}^2 \\
	&= |AP - z_0 - \bar R(\phi_1)z_1|_C^2 \label{eqn:FpAsLinearLeastSquares},
	\end{align} where $z_0 = [p_{01};0;p_{02}],z_1 = [0;p_{12};0]$, $\bar R(\phi_1)$ is the $6 \times 6$ matrix that is $\bar R(\phi_1) = \diag(R_1^\intercal,R_1^\intercal,R_1^\intercal)$, and 
	\begin{equation}
	A = \begin{bmatrix}
	I_2 & 0\\
	-I_2 & I_2\\
	0 & I_2
	\end{bmatrix}.
	\end{equation} The $6\times6$ matrix $C$ is $\text{diag}(\Sigma,\Sigma,\Sigma)$.
	Hence by Lemma \ref{lem:linearLeastSquaresSolution},
	\begin{align}
	P^\star(\phi_1) &:= \underset{P}{\text{argmin }} F_p(P,\phi_1)\\
	&= (A\transp C\inv A)\inv A\transp C\inv (z_0-\bar R(\phi_1)z_1)\\
	F_p^\star(\phi_1) &:= \underset{P}{\text{min }} F_p(P,\phi_1) = |z_0-\bar R(\phi_1)z_1|^2_Q.
	\end{align} $F_p^\star(\phi_1)$ evaluates to \cite[Theorem 1]{Wang2013nonlinearitystructurepoint}:
	\begin{equation}
	F_p^\star(\phi_1) = c_0 - 2a_0\cos(\phi_1 - \theta_0),
	\end{equation} where the constants $c_0 = z_0\transp Q z_0 + z_1\transp Q z_1$, $a_0 = \sqrt{(z_0\transp Q z_1)^2 + (z_0\transp Q \bar R(\frac{\pi}{2})z_1)^2}$, and $\theta_0 = \text{atan2}(-z_0\transp Q \bar R(\frac{\pi}{2})z_1, -z_0\transp Q z_1)$ are determined by the measurement and covariance data only. Notice that $c_0,a_0 > 0$, since the measurements are assumed to be nonzero.

	Then, if we let 
	\begin{align}
	f(\Phi) &= c_0 - 2a_0\cos(\phi_1 - \theta_0) + F_\phi(\Phi)\\
	g(\Phi) &= c_0 - 2a_0\cos(\phi_1 - \theta_0) + G_\phi(\Phi),
	\end{align}
	then instead of solving \eqref{opt:orignalWrapCost} and \eqref{opt:originalChordalCost}, we can instead solve the \textit{two-dimensional} problems:
	\begin{align}
	\min_{\Phi} f(\Phi) &:= \min_{\Phi} F(P^\star(\phi_1),\Phi)\label{opt:2Dwrap}\\
	\min_{\Phi} g(\Phi) &:= \min_{\Phi} G(P^\star(\phi_1),\Phi).\label{opt:2Dchordal}
	\end{align}
	We will use $f(\Phi)$ and $f(\phi_1,\phi_2)$ interchangeably, and similarly with $g(\Phi)$ and $g(\phi_1,\phi_2)$. 
	The following lemma tells us what minima in $f$ and $g$ imply about minima in $F$ and $G$, which will be used in the proofs of Theorem \ref{thm:localMinimaOnRk} and \ref{thm:minIsUniqueForChordal}.
	
	\begin{lemma}\label{lem:minOfLowDimProblemIsMinOfHighDim}
		Consider the problem of minimizing a function of two variables $f(x_1,x_2)$ with $x_1\in\R^n,x_2\in\R^m$. Suppose also there exists a function $x_2^\star(x_1)$ that for fixed $x_1$,
		\begin{equation}\label{eqn:x2star}
		x_2^\star(x_1) = \underset{x_2}{\text{argmin}}f(x_1,x_2)
		\end{equation} Then, if 
		\begin{equation}
		\label{eqn:x1star}
		x_1^\star = \underset{x_1}{\text{argmin}}f(x_1,x_2^\star(x_1)),
		\end{equation} then 
		\begin{equation}\label{eqn:x1starx2star}
		(x_1^\star,x_2^\star(x_1^\star)) = \underset{x_1,x_2}{\text{argmin}}f(x_1,x_2).
		\end{equation} If additionally $f$ is known to have a unique minimum, then $(x_1^\star,x_2^\star(x_1^\star))$ is its unique minimum.
	\end{lemma}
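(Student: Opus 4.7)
The plan is to prove the claim by a standard partial-minimization argument, chaining two inequalities so as to sandwich $f(x_1^\star, x_2^\star(x_1^\star))$ as a lower bound on $f(x_1, x_2)$ for every $(x_1, x_2)$.

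First I would fix an arbitrary point $(x_1, x_2) \in \R^n \times \R^m$. By the defining property \eqref{eqn:x2star} of $x_2^\star(x_1)$, we have $f(x_1, x_2) \ge f(x_1, x_2^\star(x_1))$, since $x_2^\star(x_1)$ minimizes $f(x_1, \cdot)$. Next, by the defining property \eqref{eqn:x1star} of $x_1^\star$, we have $f(x_1, x_2^\star(x_1)) \ge f(x_1^\star, x_2^\star(x_1^\star))$, because $x_1^\star$ minimizes the partially minimized function $x_1 \mapsto f(x_1, x_2^\star(x_1))$. Chaining these two inequalities gives $f(x_1, x_2) \ge f(x_1^\star, x_2^\star(x_1^\star))$ for every $(x_1, x_2)$, which is exactly \eqref{eqn:x1starx2star}.

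For the uniqueness statement, I would argue by contradiction or by direct appeal: if $f$ admits a unique minimizer $(\hat x_1, \hat x_2)$, then since $(x_1^\star, x_2^\star(x_1^\star))$ achieves the minimum value of $f$ by the argument above, it must coincide with $(\hat x_1, \hat x_2)$, proving that $(x_1^\star, x_2^\star(x_1^\star))$ is that unique minimizer.

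I do not foresee any real obstacle here; the result is essentially the standard observation that joint minimization can be decomposed into inner and outer minimization (sometimes called partial or nested minimization). The only care required is to be clear that $x_2^\star(x_1)$ is assumed to exist as a function of $x_1$, i.e.\ the inner minimum is attained for every $x_1$, which is given in the hypothesis \eqref{eqn:x2star}. No differentiability, convexity, or continuity assumption is needed, since the argument is purely about the ordering of function values.
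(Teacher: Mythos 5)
Your proposal is correct and is essentially identical to the paper's own proof: the same two-step chain $f(x_1^\star,x_2^\star(x_1^\star)) \leq f(x_1,x_2^\star(x_1)) \leq f(x_1,x_2)$ obtained from \eqref{eqn:x1star} and \eqref{eqn:x2star}, followed by the same appeal to uniqueness. No gaps.
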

	
	\begin{proof}
		For any $x_1\in\R^n,x_2\in\R^m$, by \eqref{eqn:x1star} and \eqref{eqn:x2star},
		\begin{equation}
		f(x_1^\star,x_2^\star(x_1^\star)) \leq f(x_1,x_2^\star(x_1)) \leq f(x_1,x_2),
		\end{equation} and hence \eqref{eqn:x1starx2star}. Uniqueness of the minimum on $f$ yields uniqueness of $(x_1^\star,x_2^\star(x_1^\star))$.
	\end{proof}

	
	\section{Analyzing local minima of ``geodesic cost''}\label{sec:angularCost}
	In this section, we consider the pose-graph optimization problem \eqref{opt:2Dwrap}. We further reduce it to a set of one-dimensional optimization problems, and use these 1D optimization problems to analyze the local and global minima of \eqref{opt:2Dwrap} and the original problem  \eqref{opt:orignalWrapCost}.

	\subsection{Representing \eqref{opt:2Dwrap} as three 1D optimization problems}
	Let $\mathcal S$ be the $2\pi \times 2\pi$ square with $\phi_1\in[\phi_{01}-\pi,\phi_{01}+\pi]$ and $\phi_2\in[\phi_{02}-\pi,\phi_{02}+\pi]$. Because wrap() is $2\pi$-periodic in $\phi_1$ and $\phi_2$, it suffices to consider only $\mathcal S$ when analyzing $F_\phi$. Figure \ref{fig:wrapsurface} shows a surface plot of $F_\phi(\Phi)$.
	
	\begin{figure*} 
		\centering
		\subfloat[Surface plot of $F_\phi(\Phi)$ on $\mathcal S$ for $\Sigma = I$.\label{fig:wrapsurface}]{%
			\includegraphics[height=0.22\textheight]{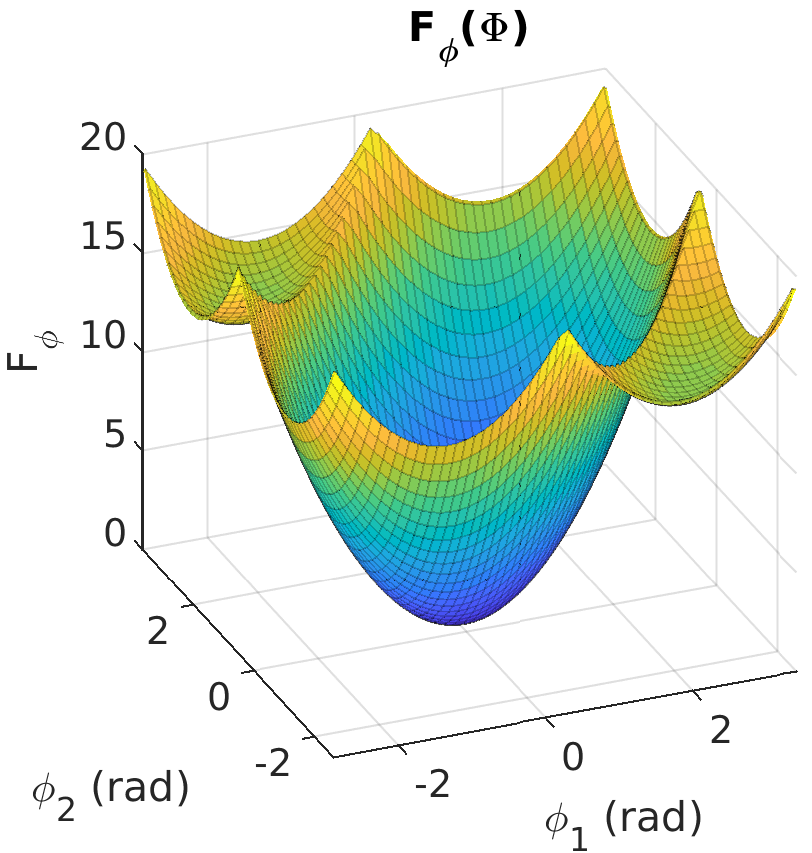}}
		\hfill
		\subfloat[Surface plot of $f(\Phi)$ on $\mathcal S$. \label{fig:surfplotoptimal2d}]{%
			\includegraphics[height=0.22\textheight]{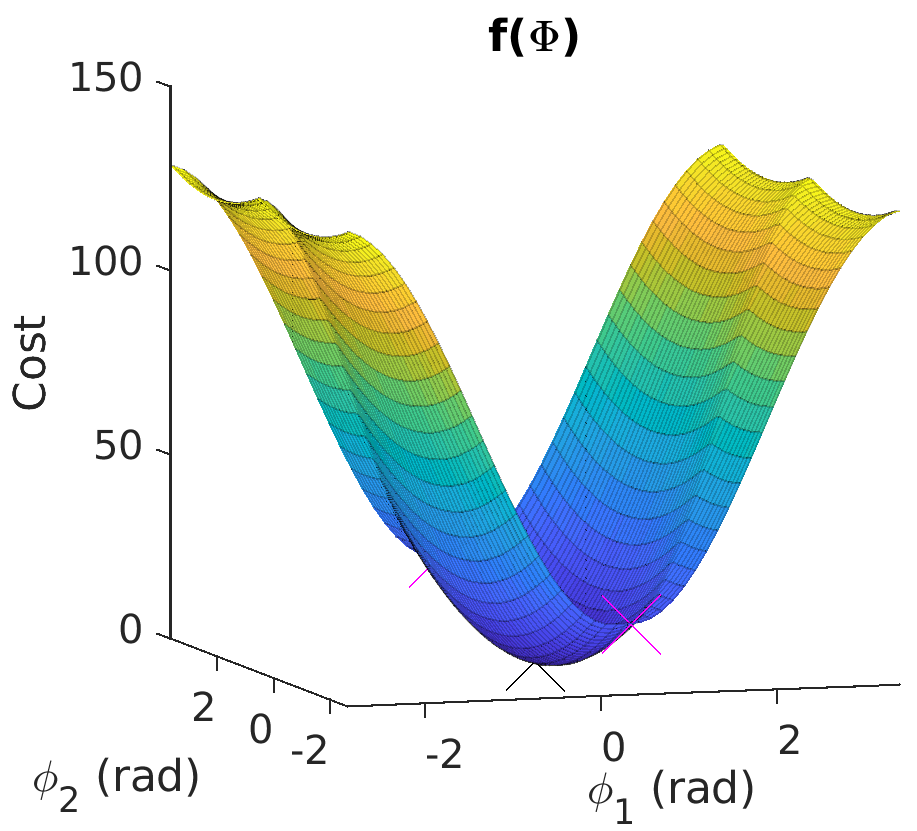}}
		\hfill
		\subfloat[1D problem costs $f_{1,k}(\phi_1)$. \label{fig:opt1dcurves}]{%
			\includegraphics[width=0.23\textheight]{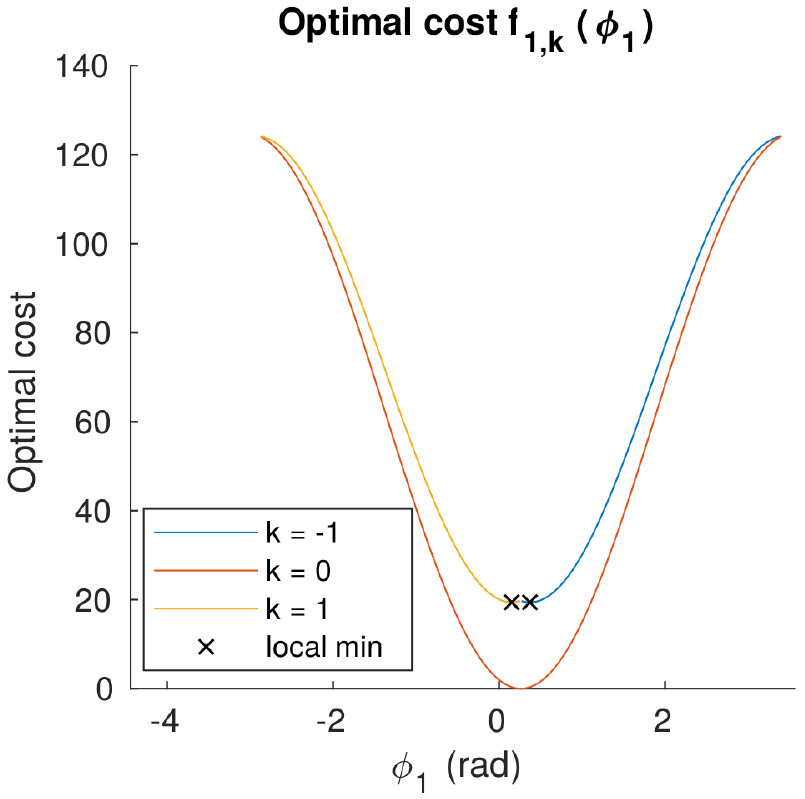}}
		
		\caption{\textbf{Plots from the noise-free 3-pose example problem in Section \ref{sec:exampleAndDiscussion} using geodesic cost.} (a) Notice that on each square $\mathcal S$, there are three minima of $F_\phi(\Phi)$, one on each region $\mathcal R_k$ (c.f. Figure \ref{fig:regions}). (b) Now for $f(\Phi)$: suboptimal local minima are marked by pink x's; we show their existence in Theorem \ref{thm:localMinimaOnRk}. The global minimum in $\mathcal S$ is marked by a black x. (c) 1D optimal costs $f_{1,k}(\phi_1)$ plotted on their domain of definition in $\phi_1$. Notice minima exist for $k = \pm 1$ with cost approximately equal to 20, which are marked with black `x's. }
		\label{fig:group2}
	\end{figure*}

	On the region $\mathcal S$, the function $F_\phi(\Phi)$ is:
	\begin{align}
	F_\phi(\Phi) &= \frac{1}{\sigma^2}\left((\phi_{01}-\phi_1)^2+\wrap(\phi_{12}-(\phi_2-\phi_1))^2\right. \nonumber\\ & \quad + (\phi_{02}-\phi_2)^2\left.\right)
	\end{align}
	
	To keep notation compact, we use the shorthand
	\begin{equation}
	\xi(\Phi) = \phi_{12}-(\phi_2-\phi_1).
	\end{equation} Then, on $\mathcal S$, $\wrap(\xi(\Phi))$ can be replaced by:
	\begin{equation}
	\wrap(\xi(\Phi)) = \begin{cases}
	\xi(\Phi) & \text{if } \xi(\Phi) \in (-\pi,\pi)\\
	\xi(\Phi)-2\pi & \text{if } \xi(\Phi)> \pi\\
	\xi(\Phi)+2\pi & \text{if } \xi(\Phi) < -\pi\\
	\end{cases}
	\end{equation}

	Hence we can rewrite this as $\wrap(\xi(\Phi)) = \xi(\Phi) \pm 2k\pi$ for $k\in\{-1,0,1\}$ on some appropriate regions $\mathcal R_k$ of $\mathcal S$. This results in a natural subdivision of $\mathcal S$ into three regions $\mathcal R_k$ (see Figure \ref{fig:regions}): for $\xi(\Phi) > \pi$, $k = -1$, which corresponds to the (open) lower right triangle, for $\xi(\Phi) < -\pi$, $k = 1$, which corresponds to the (open) upper left triangle, and $\xi(\Phi)\in(-\pi,\pi)$ for $k = 0$, the middle region (also open). Notice we have included in $\mathcal R_k$ any points on the non-differentiable boundary where $\xi(\Phi) = \pm\pi$.
	
	Then, for each $k\in\{-1,0,1\}$, for $\Phi\in\mathcal R_k$, $F_\phi(\Phi)$ can be rewritten:
	\begin{align}
	F_\phi(\Phi) = F_{\phi,k}(\Phi) := & \frac{1}{\sigma^2}\left(\right.(\phi_{01}-\phi_1)^2 + (\phi_{02}-\phi_2)^2 \nonumber\\&+ (\xi(\Phi) + 2k\pi)^2\left.\right)\label{eqn:wrapSurrogateCost}.
	\end{align} 
	
	For each $k$, it can be seen that \eqref{eqn:wrapSurrogateCost} is a least-squares cost function in $\phi_2$. That is, for any given $\phi_1$ and $k$, we can find the optimal $\phi_2$ that minimizes $F_{\phi,k}$ by again using Lemma \ref{lem:linearLeastSquaresSolution}. This reduces the 2D optimization problem of minimizing $f_k(\Phi)$ to minimizing a 1D problem.  We rewrite $F_{\phi,k}(\Phi)$ as:
	\begin{equation}
	F_{\phi,k}(\phi_1,\phi_2) = |A_2 \phi_2 + Z_k + A_1\phi_1|_{C_\phi},
	\end{equation} where the $3\times1$ column vectors $A_2 = [0; -1; -1], Z_k = [\phi_{01};\phi_{12} + 2k\pi; \phi_{02}]$, and $A_1 = [-1; 1;0]$. The $3\times 3$ matrix $C_\phi$ is equal to $\text{diag}(\sigma^2,\sigma^2,\sigma^2)$. Hence the angular component of the cost function can be written:
	\begin{align}
	F_{\phi,k,1}(\phi_1) &:= \min_{\phi_1,k} F_{\phi,k}(\phi_1,\phi_{2,k}^\star(\phi_1))\nonumber\\
	&= (Z_k+A_1\phi_1)\transp Q_2(Z_k+A_1\phi_1), 
	\end{align} where $Q_2 = C_\phi\inv - C_\phi\inv A_2(A_2\transp C_\phi\inv A_2)\inv A_2\transp C_\phi\inv$, and
	\begin{equation}
	\phi_{2,k}^\star(\phi_1) = \frac{1}{2}(\phi_1 + 2k\pi + \phi_{12} + \phi_{02}).
	\end{equation}
	Hence $f(\Phi)$ can also be re-written as a set of one-dimensional cost functions:
	\begin{align}
	f_{1,k}(\phi_1) &:= \min_{\phi_1}f(\phi_1,\phi_{2,k}^\star(\phi_1)) =  c_0 - 2a_0\cos(\phi_1-\theta_0) +\nonumber\\ &(Z_k+A_1\phi_1)\transp Q_2(Z_k+A_1\phi_1). \label{eqn:1dSurrogateCost}
	\end{align} For each $k$, this is obviously smooth, and has derivatives:
	\begin{align}
	f'_{1,k}(\phi_1) &= 2a_0\sin(\phi_1-\theta_0) + 2(Z_k+\phi_1A_1)\transp Q_2 A_1,\\
	f''_{1,k}(\phi_1) &= 2a_0\cos(\phi_1-\theta_0) + 2A_1\transp Q_2 A_1.
	\end{align}

	Hence we have reduced the dimension of the optimization problem from 2D in $f(\Phi)$ to a set of three 1D optimization problems in $f_{1,k}(\phi_1)$. Figure \ref{fig:opt1dcurves} shows the one-dimensional $f_{1,k}$ for each region for the example problem in Section \ref{sec:exampleAndDiscussion}.
	
	We also define the 2- and 6-dimensional cost functions for each $k$.	In place of $f(\Phi)$, we consider three corresponding 2D problems:
	\begin{equation}
	f_k(\Phi) = c_0 - 2a_0\cos(\phi_1-\phi_{01}) + F_{\phi,k}(\Phi),
	\end{equation} and in place of $F(P,\Phi)$,
	\begin{equation}
	F_k(P,\Phi) = F_P(P,\phi_1) + F_{\phi,k}(\Phi).
	\end{equation} 
	\begin{remark}\label{rem:minfkDoesntMeanMinf}
		Note however that even if some $\Phi^\star$ is a global minimum of $f_k(\Phi)$, this does not necessarily imply it is a global minimum of $f(\Phi)$. This is because $f_k(\Phi)=f(\Phi)$ only on $\mathcal R_k$; $f(\Phi)$ may  be less than $f_k(\Phi^\star)$ outside $\mathcal R_k$.
	\end{remark}

	\subsection{Main result: Existence of multiple local minima of $F$}
	Now that we have represented $f(\Phi)$ as a triplet of 1D problems $f_{1,k}(\phi_1)$, we use them to analyze $f(\Phi)$ and $F(P,\Phi)$. 
	
	In this section, we assume that the measurements are ``perfect''; that is, 
	\begin{equation}\label{eqn:measurementsArePerfect}
	\phi_{01} + \phi_{12} = \phi_{02}.
	\end{equation} When the measurements are perfect, $\Phi^\star$, the global minimum on $\mathcal S$, should match measurements exactly: 
	$\Phi^\star = [\phi_{01};\phi_{02}]$. This can be seen by checking that $f(\Phi^\star) = 0$. We are more interested in proving the existence of suboptimal local minima. 
	
	We claim that even in the ideal case of spherical covariance and perfect measurements, there are multiple suboptimal local minima of $f$ and $F$. The proofs contain only elementary linear algebra and vector calculus, and have been relegated to the appendix.
	\begin{lemma}\label{lem:noGlobalMinimaInR_pm1}
		Assume \eqref{eqn:measurementsArePerfect} holds. Then, there are no global minima of $f(\Phi)$ in $\mathcal R_{\pm 1} = \mathcal R_1 \cup \mathcal{R}_{-1}$. 
	\end{lemma}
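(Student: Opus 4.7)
The plan is to show that $f$ is strictly positive on $\mathcal R_{\pm 1}$, while the global minimum value $\min f = 0$ is attained at $\Phi^\star = [\phi_{01};\phi_{02}] \in \mathcal R_0$. Since any global minimizer of $f$ must realize $\min f$, this rules out global minima inside $\mathcal R_{\pm 1}$.

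First, $f = F_p^\star + F_\phi$ is a sum of two nonnegative quantities (a squared Mahalanobis norm and a sum of squared wrap values divided by $\sigma^2$), so $f \geq 0$ everywhere. The excerpt already observes $f(\Phi^\star) = 0$ under perfect measurements, so $\min f = 0$; and $\xi(\Phi^\star) = \phi_{12} - (\phi_{02} - \phi_{01}) = 0$ by \eqref{eqn:measurementsArePerfect}, so $\Phi^\star \in \mathcal R_0$.

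It then suffices to show $F_\phi > 0$ on $\mathcal R_{\pm 1}$, since together with $F_p^\star \geq 0$ this gives $f > 0$ there. On $\mathcal R_k$, equation \eqref{eqn:wrapSurrogateCost} writes $F_\phi$ as $(1/\sigma^2)$ times the sum of three nonnegative squares $(\phi_{01}-\phi_1)^2$, $(\phi_{02}-\phi_2)^2$, and $(\xi(\Phi) + 2k\pi)^2$. For $F_{\phi,k}(\Phi) = 0$ all three squares must vanish, forcing simultaneously $\phi_1 = \phi_{01}$, $\phi_2 = \phi_{02}$, and $\phi_{12} - (\phi_2 - \phi_1) + 2k\pi = 0$. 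Substituting the first two into the third and using $\phi_{12} = \phi_{02} - \phi_{01}$ collapses the last condition to $2k\pi = 0$, a contradiction for $k = \pm 1$. The argument is short because we only need to rule out the exact value zero on $\mathcal R_{\pm 1}$; the key (and only) observation is that the additive $2k\pi$ offset inside the middle squared term obstructs simultaneous vanishing of all three squares whenever $k \neq 0$. If a quantitative strengthening were desired, one could minimize $x^2 + y^2 + (x - y + 2k\pi)^2$ (with $x = \phi_1 - \phi_{01}$, $y = \phi_2 - \phi_{02}$) by setting the gradient to zero at $(x,y) = (-2k\pi/3, 2k\pi/3)$, giving $F_\phi \geq 4\pi^2/(3\sigma^2)$ on $\mathcal R_{\pm 1}$, but this is not needed for the present statement.
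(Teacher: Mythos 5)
Your proof is correct, but it takes a genuinely different route from the paper's. The paper compares the reduced one-dimensional costs across regions: it computes $f_{1,0}(\phi_1)-f_{1,1}(\phi_1) = -\tfrac{2\pi}{\sigma^2}(\pi+\phi_1-\phi_{01})<0$ on the $\phi_1$-range covered by $\mathcal R_1$, so every point of $\mathcal R_1$ is strictly beaten by a point of $\mathcal R_0$ with the same $\phi_1$ (and symmetrically for $\mathcal R_{-1}$). Because the position term $F_p^\star(\phi_1)$ is common to both branches at fixed $\phi_1$, it cancels in that comparison. Your argument instead pins down the global minimum value ($0$, attained at $[\phi_{01};\phi_{02}]\in\mathcal R_0$) and shows $F_\phi>0$ throughout $\mathcal R_{\pm1}$ because the three squares in \eqref{eqn:wrapSurrogateCost} cannot vanish simultaneously when $k\neq 0$. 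This is shorter, avoids the 1D reduction entirely, and even gives the quantitative gap $F_\phi\geq 4\pi^2/(3\sigma^2)$ on $\mathcal R_{\pm1}$, which the paper's proof does not.

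One caveat worth flagging: your argument hinges on $f(\Phi^\star)=0$, which requires $F_p^\star(\phi_{01})=c_0-2a_0=0$, i.e.\ the \emph{position} measurements must also be mutually consistent. That does not follow from the lemma's stated hypothesis \eqref{eqn:measurementsArePerfect}, which constrains only the orientation measurements. You are entitled to the claim in the sense that the paper itself asserts $f(\Phi^\star)=0$ in the text preceding the lemma, so your proof is valid under the paper's reading of ``perfect measurements''; but it is strictly less general than the paper's proof, which never touches $F_p^\star$ and therefore holds for arbitrary (even inconsistent) position data. If you want to match that generality, replace the ``minimum value is $0$'' step with the observation that for any $\Phi\in\mathcal R_{\pm1}$ the point $(\phi_1,\phi_{2,0}^\star(\phi_1))$ shares the same $F_p^\star(\phi_1)$ but has strictly smaller angular cost --- which is exactly the paper's comparison.
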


\begin{theorem}\label{thm:localMinimaOnRk}
	Assume that the measurements are perfect, i.e. \eqref{eqn:measurementsArePerfect}. Then, $f(\Phi)$ has at least two suboptimal local minima on $\mathcal S$, one in $\mathcal R_1$, and the other in $\mathcal R_{-1}$. Each of these correspond to (suboptimal) local minima of $F(P,\Phi)$.
\end{theorem}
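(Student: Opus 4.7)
The plan is to reduce $f$ to the 1D functions $f_{1,k}$, use a compactness argument on a carefully chosen interval to locate an interior critical point $\psi^\star$ of $f_{1,1}$ on $[-\pi,0]$ (and symmetrically of $f_{1,-1}$ on $[0,\pi]$), verify that the corresponding 2D points lie in the interiors of $\mathcal R_1$ and $\mathcal R_{-1}$, and finally lift twice more: once from $f_{1,k}$ to $f_k$, and once from $f$ to $F$, via the inequality chain used in Lemma~\ref{lem:minOfLowDimProblemIsMinOfHighDim} applied locally.

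First, under perfect measurements the true positions also satisfy the measurement equations, so $\min_P F_p(P,\phi_{01})=0$ and hence $F_p^\star(\phi_1) = c_0 - 2a_0\cos(\phi_1-\theta_0)$ attains zero at $\phi_1 = \phi_{01}$; this forces $\theta_0 = \phi_{01}$ and $c_0 = 2a_0$. Substituting these, together with $\phi_{02} = \phi_{01}+\phi_{12}$, into the derivative of \eqref{eqn:1dSurrogateCost} and computing $A_1^\top Q_2 A_1$ and $Z_k^\top Q_2 A_1$ directly from the definitions reduces $f_{1,k}'$, in terms of $\psi := \phi_1 - \phi_{01}$, to
\begin{equation*}
f_{1,k}'(\phi_1) = 2a_0\sin\psi + \frac{3\psi + 2k\pi}{\sigma^2}.
\end{equation*}

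For $k=1$, the endpoints of $[-\pi,0]$ give $f_{1,1}'(0) = 2\pi/\sigma^2 > 0$ and $f_{1,1}'(-\pi) = -\pi/\sigma^2 < 0$, so the minimum of the smooth $f_{1,1}$ on the compact interval $[-\pi,0]$ occurs at an interior point $\psi^\star \in (-\pi, 0)$; Fermat gives $f_{1,1}'(\psi^\star) = 0$, and since a neighborhood of $\psi^\star$ stays in $(-\pi,0)$, $\psi^\star$ is also a local minimum of the unrestricted $f_{1,1}$. A direct computation using $\phi_{2,1}^\star$ and $\phi_{02}=\phi_{01}+\phi_{12}$ gives $\xi(\Phi^\star) = \psi^\star/2 - \pi \in (-3\pi/2,-\pi)$ at $\Phi^\star := (\phi_{01}+\psi^\star,\, \phi_{2,1}^\star(\phi_{01}+\psi^\star))$, so $\Phi^\star$ lies strictly inside $\mathcal R_1$. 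For $\Phi$ in a small neighborhood of $\Phi^\star$ contained in $\mathcal R_1$, the chain $f_1(\phi_1,\phi_2) \geq f_1(\phi_1, \phi_{2,1}^\star(\phi_1)) = f_{1,1}(\phi_1) \geq f_{1,1}(\phi_1^\star) = f_1(\Phi^\star)$ (first inequality by definition of $\phi_{2,1}^\star$, second by the local-minimum property of $\psi^\star$) shows $\Phi^\star$ is a local minimum of $f_1$, hence of $f$ on $\mathcal S$ since $f = f_1$ on $\mathcal R_1$. Lemma~\ref{lem:noGlobalMinimaInR_pm1} then gives suboptimality. The symmetric argument on $[0,\pi]$ with $k=-1$ produces a second suboptimal local minimum of $f$ in $\mathcal R_{-1}$. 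Finally, because $P^\star(\phi_1)$ is the unique argmin of $F(\cdot,\Phi)$ over $P$, the analogous chain $F(P,\Phi) \geq f(\Phi) \geq f(\Phi^\star) = F(P^\star(\phi_1^\star),\Phi^\star)$ for $(P,\Phi)$ near $(P^\star(\phi_1^\star),\Phi^\star)$ lifts each 2D local minimum to a (suboptimal) local minimum of $F$.

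The main obstacle is that the $-2a_0\cos(\phi_1-\theta_0)$ term makes $f_{1,k}$ non-convex when $a_0$ is large, so one cannot argue uniqueness of the critical point or positive-definiteness of the Hessian of $f_k$ directly. The endpoint-sign-plus-compactness argument on $[-\pi,0]$ sidesteps this: it yields an interior minimum of $f_{1,k}|_{[-\pi,0]}$ without characterizing all critical points of $f_{1,k}$ on $\R$, and that interior location is all that is required to propagate the minimum through both subsequent lifts.
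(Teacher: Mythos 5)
Your proof is correct and follows the same high-level architecture as the paper's (reduce to the 1D costs $f_{1,k}$, locate an interior minimizer, verify membership in $\mathcal R_{\pm1}$, then lift through $f_k$, $f$, and $F$, with Lemma \ref{lem:noGlobalMinimaInR_pm1} supplying suboptimality), but the mechanism at the key step is genuinely different and cleaner. The paper establishes that the critical point is a minimum via the second-derivative test, which forces a case split on whether $\frac{3}{2a_0\sigma^2}$ exceeds $1$: when $a_0$ is large, $f_{1,1}''$ is not positive on all of $[\phi_{01}-\pi,\phi_{01}]$, so the paper must shrink the interval to $(\phi_{01}-\tfrac{\pi}{2},\phi_{01})$ and redo the sign check of $f_{1,1}'$ there. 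Your Weierstrass-plus-endpoint-derivative-signs argument ($f_{1,1}'(0)=2\pi/\sigma^2>0$, $f_{1,1}'(-\pi)=-\pi/\sigma^2<0$ in the variable $\psi=\phi_1-\phi_{01}$) delivers an interior minimizer of the restriction to the compact interval in one stroke, with no convexity information and no case analysis; this is what the non-convexity remark at the end of your writeup correctly identifies as the payoff. Two smaller divergences: you obtain $\theta_0=\phi_{01}$ from the optimality argument $F_p^\star(\phi_{01})=0$ (which, like the paper's plane-geometry Proposition \ref{prop:theta0=phi01}, implicitly uses that the \emph{position} measurements are consistent --- a standing assumption in the paper even though \eqref{eqn:measurementsArePerfect} only constrains the angles); and your explicit computation $\xi(\Phi^\star)=\psi^\star/2-\pi\in(-\tfrac{3\pi}{2},-\pi)$ makes concrete the membership in $\mathcal R_1$ that the paper dismisses as "trivial to check." Both versions of these steps are sound.
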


However, in practice, \eqref{eqn:measurementsArePerfect} does not hold, and there is usually some inconsistency in the measurements:
\begin{equation}\label{eqn:measurementsAreImperfect}
\phi_{01} + \phi_{12} = \phi_{02} + \varepsilon.
\end{equation}
It is easy to show that the boundaries of the regions $\mathcal R_k$ vary with $\varepsilon$; see Figure \ref{fig:roaGroup} for some examples. In the event that $\varepsilon$ is ``large'', the number of minima on $f(\Phi)$ may change.
\begin{remark}\label{rem:localMinMayDisappear}
	For ``large enough'' measurement mismatch $\varepsilon$, there may not exist minima on the open set $\mathcal R_{1}$.
	In the proof of Theorem \ref{thm:localMinimaOnRk}, suppose $a_0$ and $\sigma$ fixed and $\frac{3}{2a_0\sigma^2}\in(0,1)$, i.e. we are in case ``b'' in the proof. 
	Then, Theorem \ref{thm:localMinimaOnRk} relies on finding a minimum in the interval $\phi_1\in[\phi_{01}-\alpha,\phi_{01}]$, where $\alpha \in (0,\pi)$. However, with $\varepsilon \neq 0$, the interval of $\phi_1$ on which $\mathcal R_{\pm1}$ is defined shrinks. If it shrinks enough so that the minimum of $f_{1,k}$ found through Theorem \ref{thm:localMinimaOnRk} is not actually in $\mathcal R_{1}$, there will not be a minimum in $\mathcal R_1$. The same logic applies to $\mathcal R_{-1}$.
\end{remark}

In conclusion, the use of geodesic distance in the cost function $f(\Phi)$ results in a nonsmooth cost function that has multiple suboptimal local minima, even in the case of perfect measurements. 

\section{Analyzing minima of chordal cost}\label{sec:chordalCost}
	In this section we investigate the minima of optimization problem \eqref{opt:2Dchordal} and \eqref{opt:originalChordalCost}. In contrast to the previous section, we show that if measurements are perfect, the use of chordal distance yields a unique global minimum, and no suboptimal global minima. 
	
	Expanding and simplifying $G_\phi(\Phi)$ from \eqref{eqn:chordalCost}:
	\begin{equation}
	G_\phi(\Phi) = 2(3 - \cos(\phi_{01}-\phi_1) - \cos(\phi_{02}-\phi_2) - \cos(\xi(\Phi))).
	\end{equation}
	Figure \ref{fig:chordalCostAngleOnly} shows $G_\phi(\Phi)$ for the example problem considered in Figure \ref{fig:localminimumslamsolution}.
	Hence \eqref{opt:2Dchordal} can be rewritten:
	\begin{align}
	g(\Phi) = & (c_0 - 6) -2\Big((a_0+1)\cos(\phi_{01}-\phi_1) \nonumber\\ &+ \cos(\phi_{02}-\phi_2) + \cos(\xi(\Phi)) \Big).
	\end{align} Figure \ref{fig:chordalCost} shows $g(\Phi)$ for our example.
	We will also make use of the $J$, the Jacobian of $g(\Phi)$:
	\begin{equation}\label{eqn:jacobian}
	J(\Phi)\transp = 2\begin{bmatrix}
	-(a_0+1)\sin(\phi_{01}-\phi_1)+\sin(\xi(\Phi))\\
	-\sin(\phi_{02}-\phi_2) - \sin(\xi(\Phi))
	\end{bmatrix}
	\end{equation} and the Hessian
	\begin{align}
	&H(\Phi) = \nonumber\\ &2\left[\begin{smallmatrix}
	(a_0+1)\cos(\phi_{01}-\phi_1)+\cos(\xi(\Phi)) & -\cos(\xi(\Phi))\\
	-\cos(\xi(\Phi))& \cos(\phi_{02}-\phi_2) + \cos(\xi(\Phi))
	\end{smallmatrix}\right].
	\end{align}
	\subsection{Main result: Unique existence of global minimum of $G$} 
	The main claim of this section is the following theorems. Again, the proofs are elementary and have been relegated to the appendix.
	\begin{theorem}\label{thm:minIsUniqueForChordal}
		Assume that the measurements are perfect, i.e. \eqref{eqn:measurementsArePerfect} holds. Then, $\Phi = [\phi_{01};\phi_{02}]$ is the unique minimum on $\mathcal S$.
	\end{theorem}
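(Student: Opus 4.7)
The plan is to exploit the fact that $g(\Phi)$ decomposes as a constant plus a weighted sum of three cosines with strictly positive weights $a_0+1$, $1$, and $1$ (recall $a_0 > 0$). Since each cosine is bounded above by $1$, one immediately obtains the lower bound $g(\Phi) \geq (c_0-6)-2(a_0+3)$, attained if and only if all three cosine arguments are integer multiples of $2\pi$ simultaneously. The theorem therefore reduces to showing that $\Phi^\star = [\phi_{01};\phi_{02}]$ is the only point of $\mathcal{S}$ at which this simultaneous alignment occurs.

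First I would evaluate $g$ at $\Phi^\star$: the first two arguments $\phi_{0i}-\phi_i$ clearly vanish, and the third argument $\xi(\Phi^\star) = \phi_{12} - (\phi_{02}-\phi_{01})$ equals zero precisely by the perfect-measurement hypothesis \eqref{eqn:measurementsArePerfect}. This shows $g(\Phi^\star)$ attains the lower bound, so $\Phi^\star$ is a global minimum.

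Next I would argue uniqueness on $\mathcal{S}$. By definition $\phi_1 \in [\phi_{01}-\pi,\phi_{01}+\pi]$, so $\cos(\phi_{01}-\phi_1) = 1$ forces $\phi_1 = \phi_{01}$; the boundary candidates $\phi_1 = \phi_{01}\pm\pi$ give $\cos = -1$ rather than $+1$, so no ambiguity arises. Identical reasoning gives $\phi_2 = \phi_{02}$. The third alignment condition $\cos(\xi(\Phi)) = 1$ is then automatic at $(\phi_{01},\phi_{02})$ by \eqref{eqn:measurementsArePerfect}. Hence $\Phi^\star$ is the unique minimizer of $g$ on $\mathcal{S}$.

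The main obstacle: honestly, there isn't much of one, because the positive-weight decomposition $g = \mathrm{const} - 2\sum_i w_i\cos(\theta_i)$ reduces the problem to an elementary maximum-of-cosines argument; the only substantive content is that \eqref{eqn:measurementsArePerfect} is precisely the condition that aligns the zero of $\xi$ with the joint zeros of the first two cosine terms. If one further wishes to transfer uniqueness from $g$ on $\mathcal{S}$ to the full cost $G(P,\Phi)$, which is suggested by the remark preceding Lemma \ref{lem:minOfLowDimProblemIsMinOfHighDim}, one would conclude by invoking that lemma with $x_1 = \Phi$, $x_2 = P$, and $x_2^\star(x_1) = P^\star(\phi_1)$.
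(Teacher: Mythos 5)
Your argument is correct for what it proves, and it is genuinely different from (and more elementary than) the paper's: you bound $g$ from below by replacing each of the three positively-weighted cosines by its maximum, observe that the bound is attained iff all three arguments vanish modulo $2\pi$, and check that on $\mathcal S$ this pins down $\Phi^\star=[\phi_{01};\phi_{02}]$ uniquely, with \eqref{eqn:measurementsArePerfect} supplying the alignment of the third cosine. This cleanly establishes that $\Phi^\star$ is the \emph{unique global minimizer} of $g$ on $\mathcal S$, and your use of Lemma~\ref{lem:minOfLowDimProblemIsMinOfHighDim} to lift the conclusion to $G(P,\Phi)$ matches the paper.

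However, there is a gap relative to what the theorem is actually used for. In this paper ``unique minimum'' is meant in the sense of ``unique local minimum'' --- the whole point of Section~\ref{sec:chordalCost}, in contrast with Theorem~\ref{thm:localMinimaOnRk} and the region-of-attraction experiments, is that chordal cost admits \emph{no suboptimal local minima} for an iterative solver to get trapped in. Your cosine-bound argument cannot exclude such points: the geodesic cost $f$ also has a unique global minimizer under perfect measurements, yet Theorem~\ref{thm:localMinimaOnRk} shows it has additional local minima, so uniqueness of the global minimizer is strictly weaker than the claim needed here. The paper closes this gap by solving $J(\Phi)=0$ from \eqref{eqn:jacobian} to enumerate all eleven critical points on $\mathcal S$ and classifying each via the Hessian $H(\Phi)$: eight boundary critical points are saddles (the $(2,2)$ entry of $H$ vanishes while the off-diagonal entries do not), two interior critical points with $\sin(\phi_{02}-\phi_2)\neq 0$ are maxima ($H<0$ by a Schur-complement check), and only $[\phi_{01};\phi_{02}]$ has $H>0$. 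To complete your proof you would need to append such a critical-point classification, or otherwise argue that every local minimum of $g$ on $\mathcal S$ is global.
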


	However, for the case of imperfect measurements, this is no longer true. In the worst case, for $\varepsilon = \pi$, we have two distinct global minima on $\mathcal S$:
	\begin{theorem}\label{thm:noisyCaseChordal}
		If $\varepsilon = (2n+1)\pi$ with $n\in\mathbb{Z}$, multiple distinct global minima of $g(\Phi)$ exist on $\mathcal S$. 
	\end{theorem}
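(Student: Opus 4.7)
The plan is to exhibit an involution of $\mathcal S$ that preserves $g$ exactly when $\varepsilon=(2n+1)\pi$, show that its unique fixed point cannot be the global minimum, and conclude that two distinct global minima of $g$ exist on $\mathcal S$.

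First, I would define the point reflection $T:(\phi_1,\phi_2)\mapsto(2\phi_{01}-\phi_1,\,2\phi_{02}-\phi_2)$. Direct substitution gives $\phi_{01}-T_1(\Phi)=-(\phi_{01}-\phi_1)$, $\phi_{02}-T_2(\Phi)=-(\phi_{02}-\phi_2)$, and $\xi(T(\Phi))=-\xi(\Phi)+2\varepsilon$. Evenness of $\cos$ handles the first two terms, and the hypothesis $\varepsilon=(2n+1)\pi$ makes $2\varepsilon$ an integer multiple of $2\pi$, so $\cos(-\xi+2\varepsilon)=\cos(\xi)$. Thus $g\circ T = g$ on $\mathcal S$. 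Moreover, $\mathcal S$ is symmetric about $(\phi_{01},\phi_{02})$, so $T(\mathcal S)=\mathcal S$, and the unique fixed point of $T$ is $(\phi_{01},\phi_{02})$.

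Next, I would show that $(\phi_{01},\phi_{02})$ is a saddle of $g$, and hence not a global minimum. Evaluating the Hessian $H$ from Section~\ref{sec:chordalCost} at this point, where $\cos(\phi_{01}-\phi_1)=\cos(\phi_{02}-\phi_2)=1$ and $\cos(\xi)=\cos((2n+1)\pi)=-1$, yields $H(\phi_{01},\phi_{02}) = 2\bigl[\begin{smallmatrix} a_0 & 1 \\ 1 & 0 \end{smallmatrix}\bigr]$, whose determinant is $-4<0$. Hence $H$ is indefinite, confirming the fixed point is a saddle. Since $g$ is continuous and $\mathcal S$ is compact, $g$ attains its minimum at some $\Phi^\star\in\mathcal S$ with $\Phi^\star\neq(\phi_{01},\phi_{02})$. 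The unique fixed point of $T$ being $(\phi_{01},\phi_{02})$ then forces $T(\Phi^\star)\neq\Phi^\star$, and the invariance $g(T(\Phi^\star))=g(\Phi^\star)$ yields a second, distinct global minimum in $\mathcal S$.

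The creative step is spotting the symmetry $T$ and recognizing that the hypothesis $\varepsilon=(2n+1)\pi$ is exactly what makes $g$ invariant under $T$ (via $2\varepsilon\in 2\pi\mathbb{Z}$); anything weaker -- for instance, $\varepsilon$ merely nonzero -- breaks the cancellation in $\xi(T(\Phi))$. The rest, namely the Hessian computation at the fixed point and the compactness/invariance argument, is routine.
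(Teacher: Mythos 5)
Your proof is correct, but it takes a genuinely different route from the paper's. The paper proceeds by brute force: it substitutes $\phi_1^{\star\star}(\phi_2)$ to kill $J_2$, solves $J_1=0$ explicitly to enumerate all interior critical points ($\eta=n_3\pi$ and $\eta=\pm\arccos(1/(2b_0))$), and classifies each via the Hessian, concluding that exactly the two points with $\eta=\pm\arccos(1/(2b_0))$ are minima and that they have equal cost. You instead exhibit the point reflection $T$ about $(\phi_{01},\phi_{02})$, verify $g\circ T=g$ using $\xi(T(\Phi))=-\xi(\Phi)+2\varepsilon$ and $2\varepsilon\in 2\pi\mathbb{Z}$, and note that the unique fixed point is a saddle (your Hessian $2\bigl[\begin{smallmatrix} a_0 & 1\\ 1 & 0\end{smallmatrix}\bigr]$ with determinant $-4$ checks out, and one readily verifies $J=0$ there so the second-derivative test applies). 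Your argument is shorter, avoids solving any trigonometric equations, and isolates why odd multiples of $\pi$ matter; the paper's argument is longer but more informative, since it locates the minima explicitly. Two small caveats. First, the invariance $g\circ T=g$ holds for all $\varepsilon\in\pi\mathbb{Z}$, including even multiples; what is special about odd multiples is that they flip $\cos(\xi)$ to $-1$ at the fixed point and make it a saddle, so your closing remark slightly overstates the role of the hypothesis. Second, your two minimizers $\Phi^\star$ and $T(\Phi^\star)$ are distinct as points of the square $\mathcal S\subset\R^2$, which is what the theorem literally asserts; but they could in principle coincide modulo $2\pi$-periodicity if $\Phi^\star$ had both coordinates congruent to $\phi_{01},\phi_{02}$ modulo $\pi$. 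A one-line check that $g$ drops strictly below its value at those nine special points (or an appeal to the paper's explicit location $\eta=\pm\arccos(1/(2b_0))\in(\pi/3,\pi/2)$) closes this and yields the stronger ``distinct up to periodicity'' conclusion that the paper actually obtains.
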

	
	Hence it cannot be true that a unique global minimum exists for arbitrary noise. This is a significant difference of the 3-pose problem compared to the ``one-step'' problem in \cite{Wang2018Comparisontwodifferent}, which had a unique minimum for any noise magnitude.

	\begin{figure} 
\centering
	\subfloat[Cost for chordal distance $G_\phi(\Phi)$. \label{fig:chordalCostAngleOnly}]{%
		\includegraphics[height=0.2\textheight]{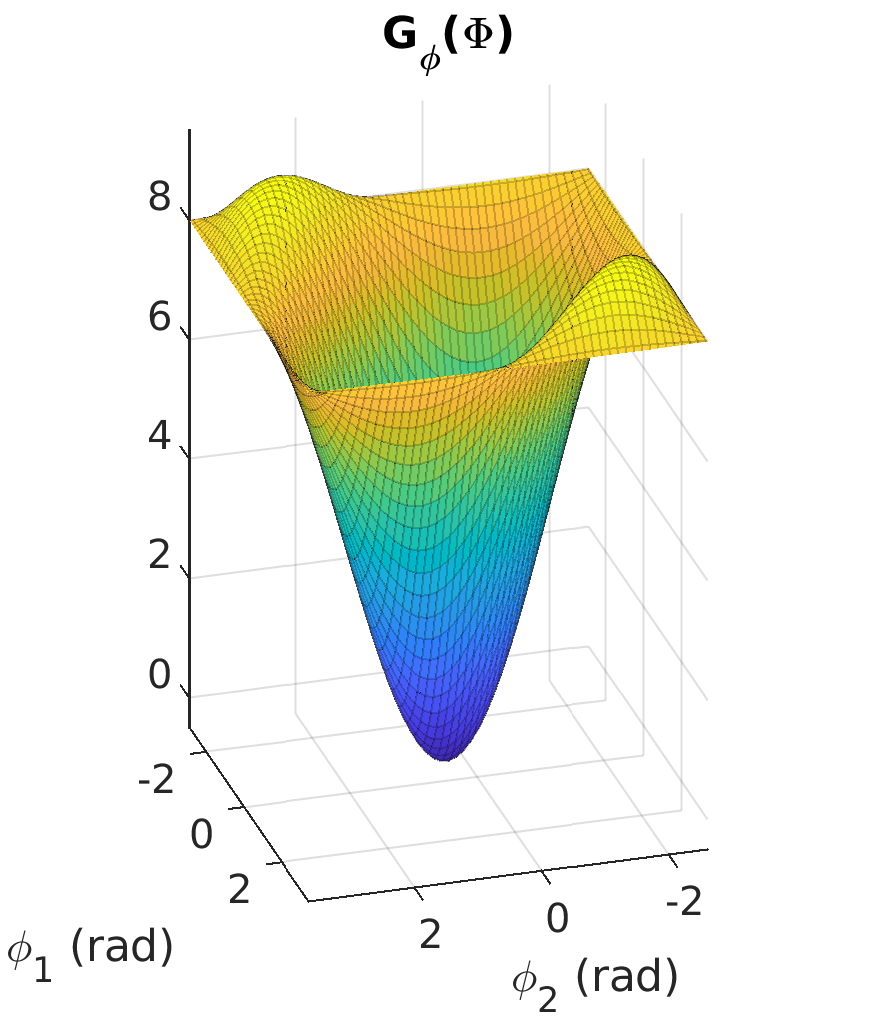}}
	\subfloat[Chordal cost $g(\Phi)$ on $\mathcal S$. \label{fig:chordalCost}]{%
		\includegraphics[height=0.2\textheight]{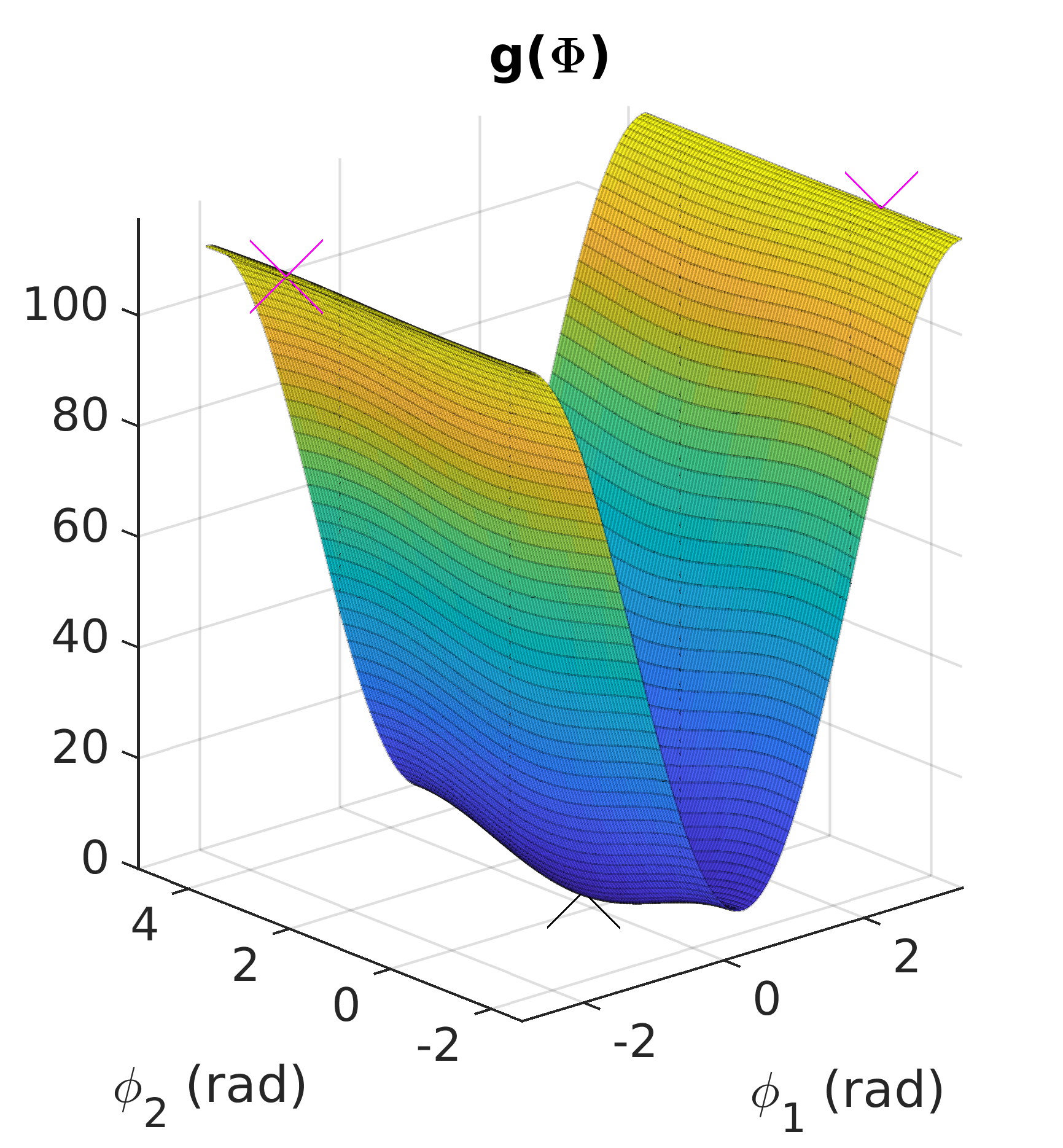}}
	
	\caption{ \textbf{Plots from the noise-free 3-pose example problem in Section \ref{sec:chordalCost} using chordal cost.} (a) 
		Compared to $F_\phi(\Phi)$ in Figure \ref{fig:wrapsurface}, $G_\phi(\Phi)$ is much more well-behaved. (b) Compared to Figure \ref{fig:surfplotoptimal2d}, $G_\phi(\Phi)$ is smooth and has a unique minimum exists on $\mathcal S$. Maxima are marked with pink `x's, the unique global minimum is marked with a black `x'. }
	\label{fig:group3}
\end{figure}	

\begin{figure}
	\hfill\subfloat{\includegraphics[width=0.5\columnwidth]{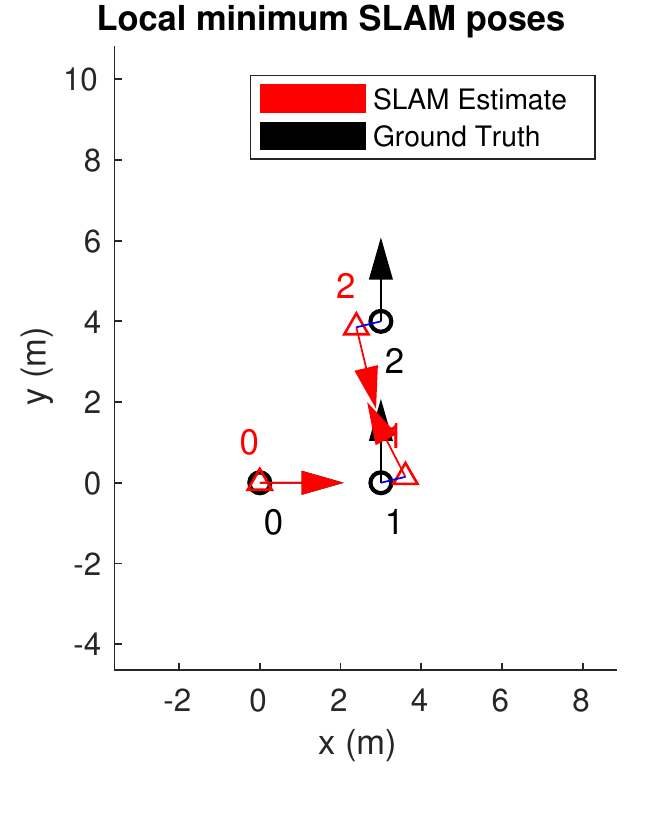}}
	\subfloat{%
		\includegraphics[width=0.5\columnwidth]{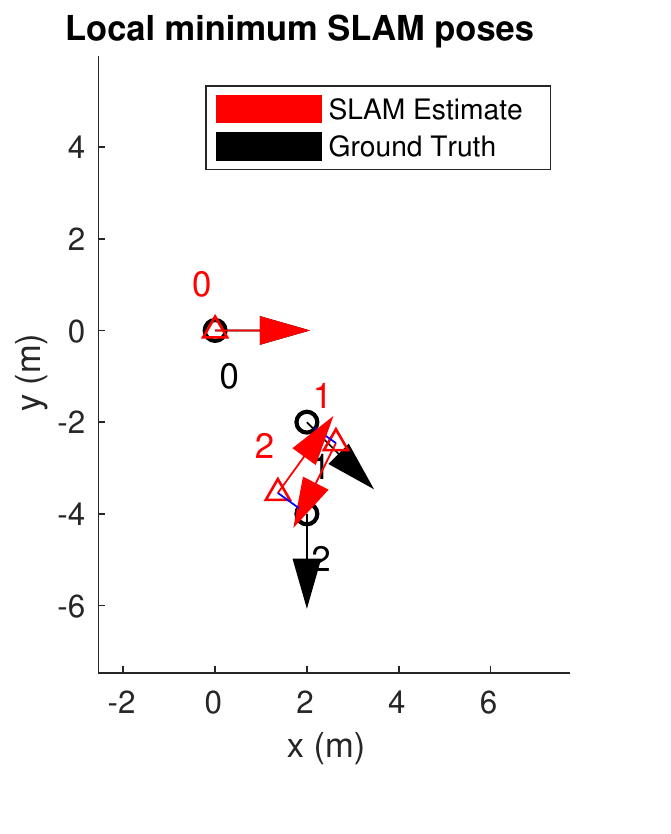}}
	\hfill
	\caption{Poses at local minima of $f(\Phi)$ for example problems 2 (left) and 3 (right), which incorporate noisy, imperfect measurements (c.f. Table \ref{tab:exampleProblems}).}
	\label{fig:groupNoisySlamSolutions}
\end{figure}

\begin{figure*}
	\centering
\includegraphics[height=0.4\textheight]{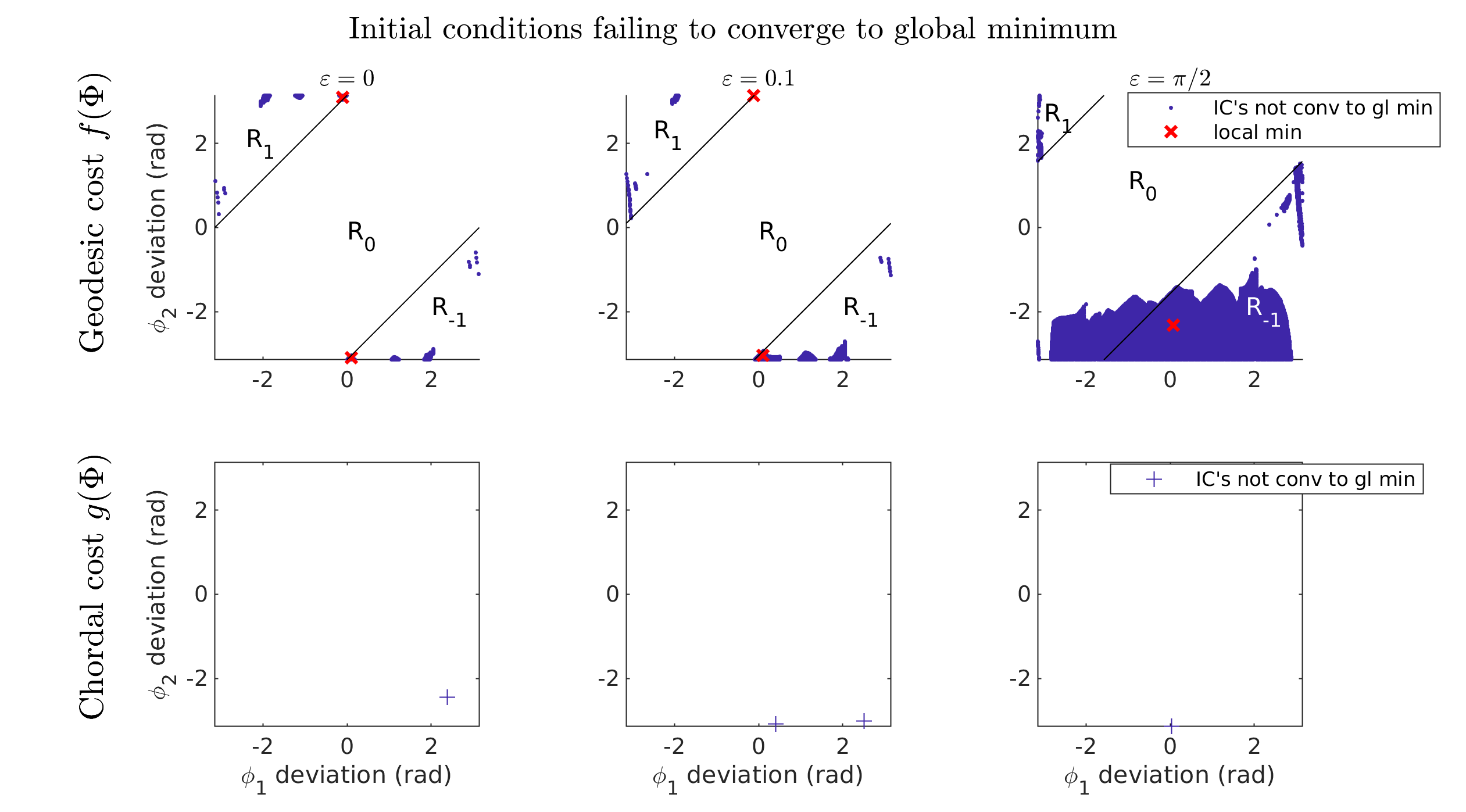}
	\caption{
	Sampled initial conditions (IC's) that did not converge to the global minimum for each example problem in the first three columns of Table \ref{tab:exampleProblems}. (Top row) All IC's that did not converge to the global minimum converged instead to a local minimum. The diagonal lines indicate the boundaries of $\mathcal R_k$. (Bottom row) IC's not converging to the global minimum are marked as $+$ for better visibility; these IC's all failed to converge due to numerical issues.
}
	\label{fig:roaGroup}
\end{figure*}
	
	
	\section{Examples and Discussion}\label{sec:exampleAndDiscussion}
	
In this section we consider several numerical examples, firstly to illustrate the results of Theorem \ref{thm:localMinimaOnRk} in the case of perfect measurements, and secondly to give more intuition about the noisy case, which is far more common in practice. 

Theorem \ref{thm:localMinimaOnRk} applies to any planar 3-pose problem with perfect measurements, and not just a single, contrived example. To emphasize this, we consider three problems with three different ground truths. To investigate the effect of noise, we have applied three levels of noise, one to each example problem: $\varepsilon \in \{0, 0.1, \pi/2\}$. Noise of $\varepsilon/3$ was added to each orientation measurement; no noise was added to position measurements. Figure \ref{fig:groupNoisySlamSolutions} shows poses corresponding to the local minima of the two noisy 3-pose problems, and Table \ref{tab:exampleProblems} summarizes these problems. We have also verified the existence of these local minima separately using MATLAB's Navigation Toolbox.

\begin{table}[]
	\centering
	\caption{Percentage of sampled points converging to a local minimum}
	\begin{tabular}{|c|l|l|l|}
		\hline
		\thead{Example\\Problem} & Ground truth $\Phi$ (rad)& $\varepsilon$  (rad) & \thead{\% IC's on $\mathcal S$ converging\\ to local min for $f(\Phi)$}\\
		\hline
		1       & $[\pi/12; \pi/6]$            & 0           & 0.2\%   \\
		2       & $[\pi/2; \pi/2]$             & 0.1         & 0.4\%   \\
		3       & $[-\pi/4; -\pi/2]$           & $\pi/2$     & 19.8\%   \\
\hline          
	\end{tabular}
\label{tab:exampleProblems}
\end{table}

 
 Figure \ref{fig:roaGroup} shows plots of initial conditions (IC's) on $\mathcal S$ that failed to converge to the global minimum; both chordal and geodesic cost were considered for each example problem in Table \ref{tab:exampleProblems}. Even though the problems have different ground truth poses, comparison still makes sense, since the positions have been ``optimized out'' (c.f. Section \ref{sec:dimensionality reduction}), and the $x$- and $y$- axes in these plots are \textit{deviations} of $\phi_1$ and $\phi_2$. For each problem, a uniform $500\times500$ grid of $\mathcal S$ was constructed. Each grid point was used as the initial condition for MATLAB's \texttt{fminunc} solver. If a global minimum was reached, the grid point was omitted from the plot; otherwise, it was plotted. Hence the top row of Figure \ref{fig:roaGroup} shows an approximation of the region of attraction (ROA) to suboptimal local minima for each problem.  While we have only shown that a finite number of sampled IC's converged to a local minimum, it seems reasonable due to smoothness that all points on some continuous area ``in between'' these sampled IC's will also converge to the local minimum (obviously, this depends on the choice of solver).


The top row of Figure \ref{fig:roaGroup} yields several interesting conclusions. For $\varepsilon = 0$ and $0.1$, the sampled IC's in $\mathcal R_0$ always converged to the global minimum. This is consistent with the experience of many users that although wrap() is used in the cost function, good results are obtained. A common method for initializing $\Phi$ for consecutive poses is to use odometry; if odometry measurements are reasonably accurate, the initial conditions are likely to be in $\mathcal R_0$, i.e. the linear region of wrap(). This is also consistent with the conclusions in \cite{Wang2013StructureNonlinearitiesPose}, namely that in the noise-free case, a unique minimum that is globally optimal exists if wrap() is assumed to be the identity.

However, for each problem, in the case of geodesic cost, there were some IC's that failed to converge to the global minimum (shown in blue). All of these IC's converged successfully instead to a local minimum. In the case of $\varepsilon = \pi/2$, the local minimum in the top left region $\mathcal R_1$ disappeared (c.f. Remark \ref{rem:localMinMayDisappear}). However, while the total number of suboptimal local minima decreased, the region of attraction to the other local minimum is enormous; almost 20\% of the initial conditions on $\mathcal S$ converged to it.


The bottom row of Figure \ref{fig:roaGroup} shows the same investigation applied to chordal cost $g(\Phi)$. Clearly, many fewer IC's fail to converge to the global minimum. Even though Theorem \ref{thm:minIsUniqueForChordal} guarantees a unique global minimum on $\mathcal S$, there were several singleton initial conditions across the $\varepsilon$ that failed to converge to the global minimum. The result of \texttt{fminunc} for each of these initial conditions had a large gradient (around 20), and had Hessians with condition number on the order of $10^3$, suggesting numerical issues pertaining to the choice of solver and tolerances. 

We emphasize that the \textit{nature} of IC's failing to converge to the global minimum for $g(\Phi)$ is different from those in $f(\Phi)$, which converged successfully, albeit to suboptimal local minima. While some issues pertaining to numerical solvers persist, it is clear from Figure \ref{fig:roaGroup} that $g(\Phi)$ has considerable advantages over $f(\Phi)$ when it comes to convergence.

\section{Conclusion}\label{sec:conclusion}
In this paper, we have shown that for a minimal pose-graph problem, even in the case of ideal measurements, the use of geodesic distance in the cost function results in multiple suboptimal local minima. For several numerical examples, we give evidence that the regions of attraction to these local minima are of nonzero measure, and show that some of these regions of attraction increase in size as noise is added. 

In contrast, under the same idealized conditions, the use of the chordal cost instead of geodesic cost yields a unique global minimum, up to periodicity. In our examples, which have various total noise magnitudes up to $\varepsilon = \pi/2$, the region of attraction of the global minimum is shown to be the whole of $\mathcal S$, except for one or two points due to numerical issues. However, for extremely large noise $(\varepsilon = \pi)$, we show that multiple distinct global minima exist, even for chordal cost. 

While we cannot claim our results apply directly to larger problems, the existence of these regions of attraction due to geodesic cost for this ideal, minimal problem suggests that similar regions may exist for larger problems.
Going forward, a clear future direction would be to extend to the 3D and the $n$-pose 2D case. Also, investigating the connection to Lie-algebraic methods, which share the benefits of using chordal cost, would be a valuable addition to our understanding of the fundamental nature of pose-graph SLAM problems.


\section*{Appendix}
\begin{shownto}{ral}
	In this appendix we give short proofs to save space. The full proofs are available at: http://arxiv.org/abs/1911.05734
\end{shownto}

\begin{proof}[Lemma \ref{lem:noGlobalMinimaInR_pm1}]
	We first consider the case $k = 1$, aiming to show that for every $\phi_1 \in(\phi_{01}-\pi,\phi_{01})$, $f_{1,0}(\phi_1) - f_{1,1}(\phi_1) < 0$. 
	Consider the difference in the 1D optimal costs $f_{1,k}(\phi_1)$ on $\mathcal R_0$ and $\mathcal R_1$:
	\begin{align}
	f_{1,0}(\phi_1)-f_{1,1}(\phi_1) &= -\frac{2\pi}{\sigma^2}(\pi + \phi_1 + \phi_{12}-\phi_{02})\nonumber\\
	&= -\frac{2\pi}{\sigma^2}(\pi + \phi_1 - \phi_{01}),\label{eqn:costR1-costR0}
	\end{align} since measurements are perfect. For $\phi_1 \in [\phi_{01}-\pi,\phi_{01}]$, this is negative; therefore $f_{0,1}(\phi_1) - f_{1,1}(\phi_1) < 0$ and hence no global minima exist in $\mathcal R_1$. The proof for $\mathcal R_{-1}$ is very similar and therefore has been omitted.
\end{proof}
\begin{shownto}{arxiv}
The following propositions will help us in Theorem \ref{thm:localMinimaOnRk}.\end{shownto}
	\begin{proposition}\label{prop:theta0=phi01}
	$\theta_0 = \phi_{01}$. 
\end{proposition}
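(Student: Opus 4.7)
\textbf{Proof plan for Proposition \ref{prop:theta0=phi01}.}

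The plan is to compute $\theta_0 = \atanTwo(-z_0\transp Q\bar R(\tfrac{\pi}{2})z_1,\,-z_0\transp Q z_1)$ directly by exploiting the spherical structure of $C$ together with the perfect measurement assumption, and to show that both arguments of $\atanTwo$ share a common positive prefactor times $\sin\phi_{01}$ and $\cos\phi_{01}$, respectively.

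First, I would use the spherical covariance assumption $C = \sigma^2 I_6$ to reduce $Q$ to $\frac{1}{\sigma^2}$ times the orthogonal projector onto $\ker A\transp$. A short direct calculation shows that $\ker A\transp$ consists exactly of vectors of the form $[y;\,y;\,-y]$ with $y\in\R^2$, so after normalisation
\begin{equation*}
Q \;=\; \frac{1}{3\sigma^2}\begin{bmatrix} I_2 & I_2 & -I_2\\ I_2 & I_2 & -I_2\\ -I_2 & -I_2 & I_2\end{bmatrix}.
\end{equation*}
Plugging in $z_0 = [p_{01};\,0;\,p_{02}]$ and $z_1 = [0;\,p_{12};\,0]$ (and the analogous block vector obtained by rotating $p_{12}$ by $\pi/2$), the two inner products collapse to scalar multiples of
\begin{equation*}
(p_{01}-p_{02})\transp p_{12}\quad\text{and}\quad (p_{01}-p_{02})\transp R(\tfrac{\pi}{2})\,p_{12},
\end{equation*}
with a common prefactor from $Q$ (up to the middle-block convention for $\bar R$).

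Next, I would invoke the perfect-measurement assumption of Section~\ref{sec:angularCost}. In addition to the angular identity~\eqref{eqn:measurementsArePerfect}, perfect measurements imply the positional consistency $p_{02}-p_{01}=R(\phi_{01})\,p_{12}$, which follows from requiring a pose configuration that makes every residual in~\eqref{eqn:6DimWrapCost} vanish. Substituting this into the two inner products turns them into $p_{12}\transp R(\phi_{01})\transp p_{12}$ and $p_{12}\transp R(\phi_{01})\transp R(\tfrac{\pi}{2})\,p_{12}$, respectively.

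Finally, I would apply the elementary 2D identity $v\transp R(\theta)v = |v|^2\cos\theta$ (valid for any $v\in\R^2$), together with $R(\phi_{01})\transp R(\tfrac{\pi}{2}) = R(\tfrac{\pi}{2}-\phi_{01})$, to obtain
\begin{equation*}
-z_0\transp Q z_1 = \kappa\cos\phi_{01},\qquad -z_0\transp Q\bar R(\tfrac{\pi}{2})z_1 = \kappa\sin\phi_{01},
\end{equation*}
where $\kappa$ is a strictly positive constant proportional to $|p_{12}|^2/\sigma^2$ (nonzero by the standing assumption that measurements are nonzero). The defining property of $\atanTwo$ then immediately gives $\theta_0 = \phi_{01}$.

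The proof is essentially bookkeeping; the only real pitfall is keeping signs and transposes consistent through the block matrix multiplications (the transpose convention inside $\bar R$ and the direction of the $\pi/2$ rotation), but once the common prefactor $\kappa>0$ is tracked, the final $\atanTwo$ step is immediate. No nonlinear analysis or case split is required.
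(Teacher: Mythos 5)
Your proposal is correct in substance and reaches the paper's intermediate expressions \eqref{proof:num}--\eqref{proof:den} by the same computation (the same $Q$ as a scaled projector onto $\ker A\transp$, the same collapse of the two bilinear forms to inner products between $p_{02}-p_{01}$ and $p_{12}$), but it closes the argument by a genuinely different route. The paper finishes geometrically: it reads $\phi_{01}=\pi-\alpha+\beta$ off a triangle diagram (Figure \ref{fig:prooffigure}) and then matches $\atanTwo$ expressions term by term. You instead substitute the positional consistency identity $p_{02}-p_{01}=R(\phi_{01})\,p_{12}$ (which is exactly what perfect measurements give, since zero residuals in \eqref{eqn:6DimWrapCost} force $p_{01}=p_1$, $p_{02}=p_2$, $p_{12}=R_1\transp(p_2-p_1)$, $\phi_{01}=\phi_1$) and apply $v\transp R(\theta)v=|v|^2\cos\theta$. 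This is figure-free and, importantly, makes explicit a hypothesis the paper leaves implicit: as stated, the proposition cannot hold for arbitrary data, because $\theta_0$ depends only on the position measurements while $\phi_{01}$ is an independent orientation measurement; both proofs secretly need the measurements to be mutually consistent, and yours says so.

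One caveat you flag but do not actually discharge is the sign bookkeeping, and it is not entirely cosmetic. With the usual active convention $R(\phi)=\left[\begin{smallmatrix}\cos\phi & -\sin\phi\\ \sin\phi & \cos\phi\end{smallmatrix}\right]$ and the paper's $\bar R(\tfrac{\pi}{2})=\diag(R(\tfrac{\pi}{2})\transp,\dots)$, a careful expansion of \eqref{proof:num} under $p_{02}-p_{01}=R(\phi_{01})p_{12}$ gives $-z_0\transp Q\bar R(\tfrac{\pi}{2})z_1=-\kappa\sin\phi_{01}$ rather than $+\kappa\sin\phi_{01}$, which would yield $\theta_0=-\phi_{01}$; the identity as the paper states it comes out only under the transposed (passive) rotation convention, which the paper never pins down. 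So your final step is right modulo fixing that single convention, and you should state explicitly which definition of $R(\phi)$ you are using before invoking $v\transp R(\theta)v=|v|^2\cos\theta$. This is a one-line repair, not a gap in the strategy.
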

\begin{proof}
\begin{shownto}{ral}
	This can be shown through somewhat arduous but simple plane geometry, and is omitted to save space.
	\end{shownto}
\begin{shownto}{arxiv}
We know that $\theta_0 = \text{atan2}(-z_0^\intercal Q \bar R(\frac{\pi}{2})z_1, -z_0^\intercal Q z_1)$. We aim to show that $\phi_{01}$ also equals this expression, so that $\phi_{01}=\theta_0$ on $\mathcal S$.

Now, $Q$ can be directly evaluated:
\begin{equation}
Q = \frac{1}{3\sigma^2}\begin{bmatrix}
I_2 & I_2 & -I_2\\
I_2 & I_2 & -I_2\\
-I_2 & -I_2 & I_2
\end{bmatrix},
\end{equation} where $I_2$ is the $2\times 2$ identity matrix. Recall also that $\bar R(\pi/2) = \diag(R(\pi/2)^\intercal,R(\pi/2)^\intercal, R(\pi/2)^\intercal)$ (c.f. \eqref{eqn:FpAsLinearLeastSquares}). Then, let $x_{ij}$ and $y_{ij}$ be position measurements in the $x$- and $y$-directions between poses $i$ and $j$, in the frame of pose $i$.
Finally, letting $\Delta x = x_{02}-x_{01}$ and $\Delta y = y_{02} - y_{01}$, we can directly evaluate $-z_0^\intercal Q\bar R(\frac{\pi}{2})z_1$ and $-z_0^\intercal Q z_1$:
\begin{align}
-z_0^\intercal Q \bar R\left(\frac{\pi}{2}\right)z_1 &= \frac{1}{3\sigma^2}\left(y_{12}\Delta x - x_{12}\Delta y\right),\label{proof:num}\\
-z_0^\intercal Q z_1 &= \frac{1}{3\sigma^2}\left(x_{12}\Delta x + y_{12}\Delta y\right).\label{proof:den}
\end{align}
Now we turn our attention to $\phi_{01}$, and show that it can also be written $\phi_{01} = \text{atan2}(-z_0^\intercal Q \bar R(\frac{\pi}{2})z_1, -z_0^\intercal Q z_1)$. Figure \ref{fig:prooffigure} shows a diagram of the geometry used to write $\phi_{01}$ in this way. From Figure \ref{fig:prooffigure}, it is evident that 
\begin{align*}
\phi_{01} &= \pi - \alpha + \beta,\\
&= \atanTwo(0, -1) - \atanTwo(\Delta y, \Delta x) + \atanTwo(y_{12},x_{12})\\
&= \atanTwo(0,-1) + \atanTwo(x_{12}\Delta y - y_{12}\Delta x, x_{12}\Delta x + y_{12}\Delta y)\\
&= \atanTwo(y_{12}\Delta x - x_{12}\Delta y, x_{12}\Delta x + y_{12}\Delta y)\\
&= \theta_0,
\end{align*} by \eqref{proof:num} and \eqref{proof:den}, and the fact that multiplying both of them by $3\sigma^2$ does not change the value of the two-argument arctangent.
\begin{figure}
	\centering
	\includegraphics[width=0.7\linewidth]{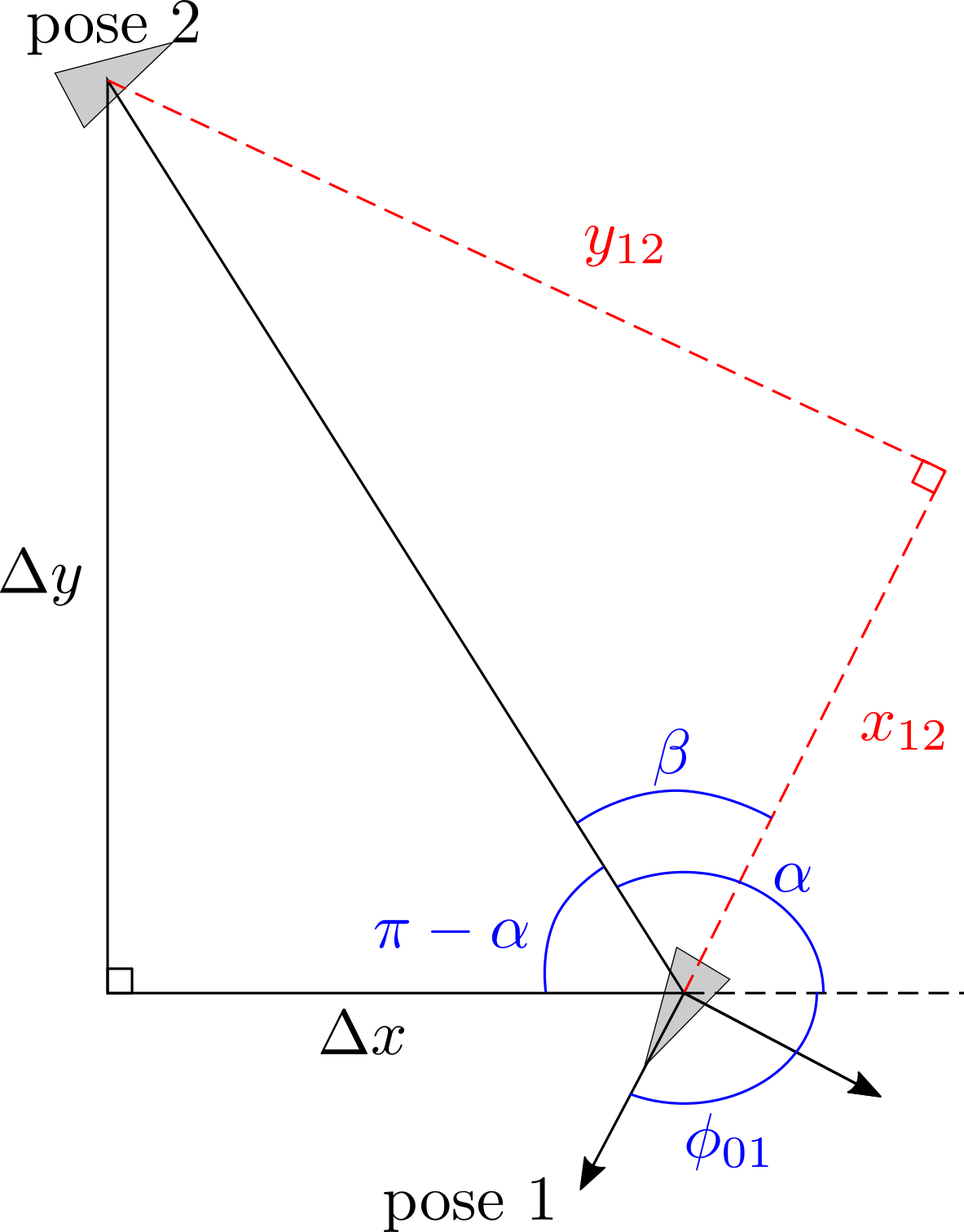}
	\caption{Drawing of geometry for the proof of Proposition \ref{prop:theta0=phi01}.}
	\label{fig:prooffigure}
\end{figure}
\end{shownto}
\end{proof}

\begin{proposition}\label{prop:f''>0}
	On $\phi_1\in(\phi_{01}-\frac{\pi}{2},\phi_{01} + \frac{\pi}{2})$, $f''_{1,k}(\phi_1)>0$.
\end{proposition}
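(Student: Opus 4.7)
My plan is to show both terms of $f''_{1,k}(\phi_1) = 2a_0\cos(\phi_1-\theta_0) + 2A_1\transp Q_2 A_1$ are strictly positive on the given interval.

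First, I would invoke Proposition \ref{prop:theta0=phi01} to replace $\theta_0$ by $\phi_{01}$, so that the first term becomes $2a_0\cos(\phi_1-\phi_{01})$. On $\phi_1\in(\phi_{01}-\pi/2,\phi_{01}+\pi/2)$, the argument lies in $(-\pi/2,\pi/2)$, so $\cos(\phi_1-\phi_{01})>0$; combined with $a_0>0$ (noted in Section \ref{sec:dimensionality reduction}, since the measurements are assumed nonzero), this term is strictly positive.

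Next, I would argue that the second term $2A_1\transp Q_2 A_1$ is also strictly positive. The clean route is direct computation: since $C_\phi = \sigma^2 I_3$, we have $C_\phi\inv = \sigma^{-2}I_3$ and $A_2\transp C_\phi\inv A_2 = 2/\sigma^2$, giving $Q_2 = \sigma^{-2}(I_3 - \tfrac{1}{2}A_2 A_2\transp)$. Plugging in $A_1 = [-1;1;0]$ and $A_2 = [0;-1;-1]$ yields $A_2\transp A_1 = -1$, and a short calculation gives $A_1\transp Q_2 A_1 = 3/(2\sigma^2) > 0$. (Alternatively, one can observe that $Q_2$ is positive semidefinite as the residual operator of a linear least-squares problem, and check that $A_1$ is not in the column space of $A_2$, giving strict positivity; but the direct computation is both shorter and yields a useful explicit constant, matching what appears in Remark \ref{rem:localMinMayDisappear}.)

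Adding these two strictly positive contributions gives $f''_{1,k}(\phi_1) = 2a_0\cos(\phi_1-\phi_{01}) + 3/\sigma^2 > 0$ on the stated interval, completing the proof. There is no real obstacle here; the only thing to be careful about is that the bound holds simultaneously for all $k\in\{-1,0,1\}$, which is automatic because neither term depends on $k$ (the $k$-dependence lives only in the linear-in-$\phi_1$ part $(Z_k+A_1\phi_1)\transp Q_2 A_1$, which is differentiated away in the second derivative).
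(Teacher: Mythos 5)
Your proposal is correct and follows essentially the same route as the paper: invoke Proposition \ref{prop:theta0=phi01} to get $2a_0\cos(\phi_1-\phi_{01})>0$ on the interval, then handle the constant term via a direct computation of $Q_2$ (the paper merely notes $Q_2\geq 0$, which already suffices since the cosine term is strictly positive, while you evaluate $2A_1\transp Q_2A_1 = 3/\sigma^2$ explicitly). The explicit constant and the observation that the second derivative is independent of $k$ are both consistent with what the paper uses later in Theorem \ref{thm:localMinimaOnRk}.
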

\begin{proof}
	On $\phi_1\in(\phi_{01}-\frac{\pi}{2},\phi_{01} + \frac{\pi}{2})$, via Proposition \ref{prop:theta0=phi01}, $2a_0\cos(\phi_1-\phi_{01}) > 0$. Also, by direct calculation, 
	\begin{equation}\label{eqn:Q2}
	Q_2 = \frac{1}{\sigma^2}\begin{bmatrix}
	1 & 0 & 0\\
	0 & \frac{1}{2} & -\frac{1}{2}\\
	0 & -\frac{1}{2} & \frac{1}{2}
	\end{bmatrix} \geq 0.
	\end{equation} 
	Hence $f''_{1,k}(\phi_1) > 0$ for $\phi_1\in(\phi_{01}-\frac{\pi}{2},\phi_{01} + \frac{\pi}{2})$, $k\in\{-1,0,1\}$.
\end{proof}
\begin{proof} [Theorem \ref{thm:localMinimaOnRk}]
	Consider first $k = 1$, which corresponds to the top left triangle, where $\phi_1\in[\phi_{01}-\pi,\phi_{01}]$. Then, by \eqref{eqn:measurementsArePerfect},
	\begin{align}
	f_{1,1}'(\phi_1) &= 2a_0\sin(\phi_1-\theta_0)-\frac{2}{\sigma^2}(\phi_{01}-\phi_1) \nonumber\\
	&\quad + \frac{1}{\sigma^2}(\phi_{12}-\phi_{02} + 2\pi + \phi_1),\text{ and} \label{eqn:f'}\\
	f_{1,1}''(\phi_1) &= 2a_0\cos(\phi_1-\phi_{01}) + \frac{3}{\sigma^2}.
	\end{align} via \eqref{eqn:Q2}. We aim to show that $f_{1,1}'(\phi_1) = 0$ on an interval where $f_{1,1}''(\phi_1)>0$. Now, $f_{1,1}''(\phi_1) > 0$, if and only if 
	\begin{equation}\label{eqn:conditionForPositivef''}
	\cos(\phi_1-\phi_{01}) > -\frac{3}{2a_0\sigma^2}.
	\end{equation} We consider two cases: case ``a'', where $\frac{3}{2a_0\sigma^2} > 1$, and case ``b'', where $\frac{3}{2a_0\sigma^2}\in(0,1)$. Since we are not considering the non-differentiable boundaries of $\mathcal R_k$, we ignore the case where $\cos(\phi_1-\phi_{01}) = -3/2a_0\sigma^2$.
	
	In case ``a'', \eqref{eqn:conditionForPositivef''} is always true, so $f_{1,1}''(\phi_1) > 0$ for all $\phi_1\in\R$, and any critical point will be a minimum. We cannot solve \eqref{eqn:f'} directly, but we can show a solution exists. By substitution, it can be shown that $f_{1,1}'(\phi_{01}) > 0$, and that $f_{1,1}'(\phi_{01} - \pi) <0$. Since $f_{1,1}'(\phi_1)$ is continuous, by the intermediate value theorem, $f_{1,1}'(\phi_1) = 0$ for some $\phi_1\in(-\phi_{01}-\pi,\phi_{01})$. 
	
	For case ``b'', $f_{1,1}''(\phi_1) > 0$ on $(\phi_{01}-\frac{\pi}{2},\phi_{01}+\frac{\pi}{2})$. As in case ``a'', we still use the intermediate value theorem, but instead evaluate $f_{1,1}'(\phi_{01}-\frac{\pi}{2})$, which equals:
	\begin{equation}
	f_{1,1}'(\phi_{01}-\frac{\pi}{2}) = - 2a_0 + \frac{\pi}{2\sigma^2},
	\end{equation}
	which is strictly negative if $\frac{3}{2a_0\sigma^2}\in(0,1)$. Hence there exists a root of $f_{1,1}'(\phi_1)$ on $(\phi_{01}-\frac{\pi}{2},\phi_{01})$, where $f_{1,1}''(\phi_1) > 0$, which implies a minimum of $f_{1,1}(\phi_1)$ on $(\phi_{01}-\frac{\pi}{2},\phi_{01})$.
	
	It is trivial to check that for $\phi_1\in[\phi_{01}-\pi,\phi_{01}]$, $(\phi_1,\phi_{2,k=1}^\star(\phi_1))$ satisfies the inequality constraints at the boundaries of $\mathcal R_1$; hence the minimum is in $\mathcal R_1$.
	Since $\phi_1^\star$ minimizes $f_{1,k}(\phi_1)$, by Lemma \ref{lem:minOfLowDimProblemIsMinOfHighDim}, $\Phi^\star = (\phi_1^\star,\phi_{2,k=1}^\star(\phi_1^\star))$ is a global minimum of $f_{2,k=1}(\Phi)$. However, while $\Phi^\star$ is a global minimum of $f_k(\Phi)$, $f_k(\Phi) = f(\Phi)$ only on $\mathcal R_k$. By Remark \ref{rem:minfkDoesntMeanMinf} and Lemma \ref{lem:noGlobalMinimaInR_pm1}, $\Phi^\star$ is merely a local minimum of $f(\Phi)$. Again by Lemma \ref{lem:minOfLowDimProblemIsMinOfHighDim}, the minimum $\Phi^\star$ of $f_k(\Phi)$ and the existence of $P^\star(\Phi)$ implies that $(P^\star(\Phi^\star),\Phi^\star)$ is a global minimum of $F_k(P,\Phi)$, and by the same logic as above, a local minimum of $F(P,\Phi)$.
	

	The proof for $\mathcal R_{-1}$ uses the same steps and is omitted.
\end{proof}

\begin{proof} [Theorem \ref{thm:minIsUniqueForChordal}] 
 The critical points of $g$ are $\Phi$ such that $J(\Phi) = 0$. This implies $J_2(\Phi)$, the second element $J(\Phi)$, is zero, which yields:
	\begin{equation}
	\phi_1^{\star\star}(\phi_2) := 2\phi_2 - \phi_{02} - \phi_{12} + 2n_1\pi, \quad n_1\in\mathbb Z
	\end{equation} which makes $J_2(\Phi) = 0$ for all $\phi_2\in\R$. 
	\begin{shownto}{arxiv}
		Then, substituting this into $J_1(\Phi) = 0$, and letting $\eta(\phi_2) = \phi_{02} - \phi_2$,
		\begin{align*}
		J_1(\Phi) &= 0\\ 0 &= -b_0\sin(2\eta) + \sin(-\eta)\\
		2b_0\sin(\eta)\cos(\eta)&=-\sin(\eta). 
		\end{align*} Solving yields:
	\end{shownto}
	\begin{shownto}{ral}
		Then, substituting this into $J_1(\Phi) = 0$ and solving yields:
	\end{shownto}
	\begin{equation}
	\phi_2^{\star\star} =\begin{cases}
	\phi_{02} + n_2\pi, n_2\in\mathbb Z & \text{for } \sin(\phi_{02}-\phi_2) = 0\\
	\phi_{02} \pm \arccos\left(-\frac{1}{2b_0}\right) & \text{for }  \sin(\phi_{02}-\phi_2) \neq 0,
	\end{cases}
	\end{equation} where $b_0 = a_0 + 1$. Since $a_0>0$, $-\frac{1}{2b_0}$ is always in the interval $[-1,0]$, and hence a valid argument to arccos(). Hence choosing $\Phi = \Phi^{\star\star} =  [\phi_1^{\star\star}(\phi_2^{\star\star}),\phi_2^{\star\star}]$ yields $J(\Phi) = 0$.
	
	Hence on the closed region $\mathcal S$, there are eleven critical points: eight on the boundary of $\mathcal S$, and three on the interior. The eight critical points on the boundary are $\Phi = [\phi_{01};\phi_{02}] + \Phi_c$, where $\Phi_c\in \{[\pm\pi;\pm\pi],[\pm\pi;0],[0;\pm\pi]\}$; these are a subset of the case when $\sin(\phi_{02}-\phi_2) = 0$. At these critical points, $H$ is sign-indefinite,\begin{shownto}{arxiv} as the bottom right term is zero and the off-diagonal terms are nonzero.\end{shownto}
	
	In the case that $\sin(\phi_{02}-\phi_2) \neq 0$, there are two $\Phi^{\star\star}$ in $\mathcal S$. By evaluating $H(\Phi^{\star\star})$ and taking the Schur complement, it is easy to show that $H(\Phi^{\star\star}) < 0$. Hence two critical points associated with $\sin(\phi_{02} - \phi_2) \neq 0$ are maxima.
	Finally, we have $\Phi = [\phi_{01},\phi_{02}]$. Evaluating the Hessian here gives 
	\begin{equation}
	H = 2\begin{bmatrix}
	a_0+2 & -1 \\ -1 & 2
	\end{bmatrix},
	\end{equation} which is positive definite for $a_0 > 0$. Hence $\Phi=[\phi_{01},\phi_{02}]$ is a \textit{unique} minimum of $g(\Phi)$ on $\mathcal S$. By Lemma \ref{lem:minOfLowDimProblemIsMinOfHighDim} and uniqueness of $P^\star$, $[P^\star;\phi_{01};\phi_{02}]$ is a unique minimum of $G(P,\Phi)$ on $\R^4\times \mathcal S$.
\end{proof}

\begin{proof}[Theorem \ref{thm:noisyCaseChordal}] 
	We show by direct calculation that for $\varepsilon = \pi$, there are two distinct global minima in $\mathcal S$. We use $\phi^{\star\star}(\phi_2)$ as in the proof for Theorem \ref{thm:minIsUniqueForChordal} to make $J_2(\Phi) = 0$, but $\phi_2^{\star\star}$ is different. As in the proof of Theorem \ref{thm:minIsUniqueForChordal}, $b_0 = a_0 + 1$, and $\eta = \phi_{02}-\phi_2$. By \eqref{eqn:measurementsAreImperfect} with $\varepsilon = \pi$, 
	\begin{equation}
		J_1(\phi_1^{\star\star}(\phi_2),\phi_2) = -b_0\sin(2\gamma + \varepsilon) + \sin(-\gamma).
	\end{equation} We are looking for the critical points on the interior of $\mathcal S$, this time with $\varepsilon = \pi$. $J_1(\phi_1^{\star\star}(\phi_2),\phi_2) = 0$ can be rewritten:
	\begin{shownto}{arxiv}
	\begin{align*}
	J_1(\phi_1^{\star\star}(\phi_2),\phi_2) &= 0 \\
	&=-b_0\sin(2\eta + \varepsilon) + \sin(-\eta) \\
	&=b_0\sin(2\eta) - \sin(\eta) \\
	&=2b_0\sin(\eta)\cos(\eta)- \sin(\eta) \\
	&=\sin(\eta)(2b_0\cos(\eta)-1)
	\end{align*}
	\end{shownto}
	\begin{shownto}{ral}
		\begin{equation}
		0 = \sin(\eta)(2b_0\cos(\eta)-1)
		\end{equation}
	\end{shownto}
	For $\eta \in [-\pi,\pi]$, this has solutions:
	\begin{equation}
	\eta = n_3\pi \text{ and } \pm \arccos\left(1/{2b_0}\right)
	\end{equation} with $n_3\in\{-1,0,1\}$. These $\eta(\phi_2)$ result in critical points of $G(\Phi)$ (i.e. $J(\Phi) = 0$). 
We can rewrite the Hessian in terms of $\eta$:
	\begin{equation}
	H(\eta) = \begin{bmatrix}
	b_0\cos(2\eta + \varepsilon) + \cos(\eta) & -\cos(\eta) \\
	-\cos(\eta) & 2\cos(\eta)
	\end{bmatrix}.
	\end{equation} 
\begin{shownto}{arxiv}
	Then, at $\eta = \pm\arccos(\frac{1}{2b_0})$,
	\begin{equation}
	H\left(\pm\arccos\left(\frac{1}{2b_0}\right)\right)=	\begin{bmatrix}
	b_0 & -\frac{1}{2b_0} \\ -\frac{1}{2b_0} & \frac{1}{b_0}
	\end{bmatrix},
	\end{equation} which is positive definite for $b_0 > 1$.
\end{shownto}
\begin{shownto}{ral}
	Evaluating this at $\eta = \pm\arccos(1/2b_0)$ yields $H(\eta) > 0$ if $b_0 > 1$.
	\end{shownto} Hence two distinct critical points $\Phi_+$ and $\Phi_-$ exist, each of which are minima. 
	\begin{shownto}{arxiv}
	We check the other possible values of $\eta$ that yield critical points and show there cannot be minima associated with them. In the case $\eta = 0$, 
	\begin{equation}
	H(\eta = 0) = \begin{bmatrix}
	1 - b_0 & -1 \\ -1 & 2
	\end{bmatrix},
	\end{equation} which has $\det(H) = 1-2b_0 < 0$, so there cannot be a minimum where $\eta = 0$. Similarly, in the final possible case $\eta = \pm \pi$, 
	\begin{equation}
	H(\eta = \pm \pi) = \begin{bmatrix}
	-b_0 -1 & 1 \\ 1 & 2
	\end{bmatrix},
	\end{equation} which has trace$(H) = -b_0 -3 < 0$, so $H(\pm\pi)$ cannot be positive definite, so there cannot be a minimum at $\eta = \pm\pi$.
	\end{shownto}
	\begin{shownto}{ral}
		Checking $H(\eta)$ at $0,\pm\pi$ result in non-positive-definite $H(\eta)$. 
	\end{shownto} 
	~Hence the only minima are at $\eta = \pm\arccos(1/2b_0)$.
	Substituting $\Phi_\pm$ into $g(\Phi)$ results in the same cost, so both are global minima of $g(\Phi)$.
	\end{proof}

%
%


	\bibliography{library}
	\bibliographystyle{IEEEtran}

\end{document}